\documentclass[reqno]{amsart}
\usepackage{}
\usepackage[top=1in, bottom=1in, left=1in, right=1in]{geometry}
\usepackage{cases}
\usepackage[colorlinks,
            linkcolor=red,
            anchorcolor=blue,
            citecolor=green
            ]{hyperref}
\usepackage{amsmath,amsthm,amsfonts,amssymb,amscd,bm}
\usepackage{url}
\usepackage{longtable}
\usepackage{ragged2e}
\usepackage{tabularx}
\usepackage{array}
\usepackage{supertabular}
\newtheorem{theorem}{Theorem}[section]
\newtheorem{lemma}[theorem]{Lemma}
\theoremstyle{definition}

\newtheorem{example}[theorem]{Example}

\newtheorem{cor}[theorem]{Corollary}

\newtheorem{alg}[theorem]{Algorithm}
\theoremstyle{remark}
\newtheorem{remark}[theorem]{Remark}
\renewcommand{\dim}{\mbox{\rm dim}}

\newcommand{\GL}{\mbox{\rm GL}}

\newcommand{\SL}{\mbox{\rm SL}}

\newcommand{\Gal}{\mbox{\rm Gal}}
\newcommand{\MOD}{\mbox{\rm ~mod~}}
\newcommand{\GQ}{\Gal(\overline{\mathbb{Q}}/\mathbb{Q})}
\newcommand{\Frob}{\mbox{\rm Frob}}

\setlength{\baselineskip}{18pt}
\numberwithin{equation}{section}

%    Absolute value notation

%    Blank box placeholder for figures (to avoid requiring any
%    particular graphics capabilities for printing this document).

\begin{document}\large

\title{Computing Galois representations and equations for modular curves $X_H(\ell)$}

%    Information for first author
\author{Maarten Derickx, Mark van Hoeij, Jinxiang Zeng}
%    Address of record for the research reported here
\address{Mathematisch Instituut, Universiteit Leiden, Postbus 9512, 2300 RA Leiden, The Netherlands}
\email{mderickx@math.leidenuniv.nl}
\address{Department of Mathematics, Florida State University, Tallahassee, Florida 32306, USA}
\email{hoeij@math.fsu.edu}
\address{Department of Mathematical Science, Tsinghua University, Beijing 100084, P. R. China}
\email{cengjx09@mails.tsinghua.edu.cn}
%    Current address
%\curraddr{Department of Mathematics and Statistics,
%Case Western Reserve University, Cleveland, Ohio 43403}
% \thanks will become a 1st page footnote.
%\thanks{This work was partially supported by NSFC grant No.11271212.}
\thanks{Second author supported by NSF grant 1319547}

%    Information for second author
%\thanks{Support information for the second author.}

%    General info
\subjclass[2012]{Primary 11F30, 11G20, 11Y16, 14Q05, 14H05}
%11F37: Forms of half-integer weight; nonholomorphic modular forms
%11F30: Fourier coefficients of automorphic forms
%11G20: Curves over finite and local fields, See also 14H25
%11Y16: Algorithms; complexity, See also 68Q25
%14H05: Algebraic functions; function fields, See also 11R58

%\date{August 15, 2011 and, in revised form, June 22, 2001.}

%\dedicatory{This paper is dedicated to our advisors.}

\keywords{modular forms, Hecke algebra, modular curves, elliptic curves, Jacobian}

\begin{abstract}
We construct plane models of the modular curve $X_H(\ell)$ and describe the moduli interpretation on these plane models. We use these explicit plane models to
compute Galois representations associated to modular forms for values of $\ell$ that are
significantly higher than in prior works.
\end{abstract}

\maketitle

\section{Introduction}

Couveignes, Edixhoven et al. \cite{Edixhoven} described polynomial time algorithms for computing Galois representations
associated to modular forms for the group SL$_2(\mathbb{Z})$. Bruin \cite{Bruin} generalized the method to modular forms for
congruence subgroups of the form $\Gamma_1(n)$. A direct consequence of their results is that Fourier coefficients of modular forms can be computed in polynomial time. As a typical example, the value of Ramanujan's $\tau$-fuction
at a prime $p$ can be computed in time bounded by a polynomial in $\log p$.

Progress has been made in designing and implementing practical variants of these algorithms.
A numerical approximation method was first implemented by Bosman \cite{Bosman} and improved
by Mascot \cite{Mascot} and Tian \cite{Tian}.
An algebraic method was first implemented by Zeng \cite{ZengYin}. In this approach, the representation is computed modulo numerous prime numbers $p$,
and then reconstructed with the Chinese Remainder theorem.
Previously the modular Galois representation associated to $\tau(p)\mod\ell$ has already been computed for $\ell\in\{11,13,17,19,29,31\}$.
In the numerical as well as the algebraic method, the main task is to
construct a certain subspace $V_\ell$ of $J_1(\ell)[\ell]$.

For each $k\in\{12,16,18,20,22,26\}$, let $\Delta_{k}=\sum_{n\ge1}\tau_k(n)q^n\in S_k(\SL_2(\mathbb{Z}))$ be the unique newform in $S_k(\SL_2(\mathbb{Z}))$
($\Delta_{12}$ is called the discriminant modular form).
For each prime number $\ell>k$, associated to $\Delta_k$ there is a continuous representation,
$$\rho_{k,\ell}:\GQ\to\textrm{Aut}(V_{k,\ell})\cong\GL_2(\mathbb{F}_\ell)$$
where $V_{k,\ell}=J_1(\ell)[\mathfrak{m}_{k,\ell}]$ is a two-dimensional $\mathbb{F}_\ell$-vector space. Here $\mathfrak{m}_{k,\ell}$ is the maximal ideal of the Hecke algebra $\mathbb{T}=\mathbb{Z}[T_n:n\ge1]\subset\textrm{End}(J_1(\ell))$, generated by $\ell$ and $T_n-\tau_k(n)$ for all $n\ge1$. The representation $\rho_{{k},\ell}$ has the following properties: it is unramified at each prime number $p$ not equal to $\ell$, and
$$\textrm{Tr}(\rho_{{k},\ell}(\Frob_p))\equiv \tau_k(p)\mod\ell,$$
$$\textrm{Det}(\rho_{{k},\ell}(\Frob_p))\equiv p^{k-1}\mod\ell.$$
So to compute $\tau_k(p)\mod\ell$, it suffices to compute $\rho_{{k},\ell}$, and this comes down to computing the representation space $V_{k,\ell}$ explicitly.

Let $\Delta_{k,\ell}\in S_2(\Gamma_1(\ell))$ be the newform with Dirichlet character $\chi$, which is congruent to $\Delta_{k}$ modulo $\ell$. Then
$\chi(p)\equiv p^{k-2}\mod \ell$ for all prime numbers $p\not=\ell$.
Let $d=\gcd(k-2,\ell-1)$, then $\chi(p^{\frac{\ell-1}{d}})\equiv(p^{k-2})^{\frac{\ell-1}{d}}\equiv1\mod\ell$.
In other words, $\chi$ is trivial on the subgroup $H$ of $G=(\mathbb{Z}/\ell\mathbb{Z})^\times$ of
% index $[G:H]=\frac{\ell-1}{d}$.
order $|H| = d$.
Let $\Gamma_H(\ell)$ be the congruence subgroup of $\SL_2(\mathbb{Z})$ defined as
$$\Gamma_H(\ell)=\left\{ \left[\begin{matrix} a & b\\ c & d \end{matrix}\right]\in \textrm{SL}_2(\mathbb{Z}):
 a \MOD \ell \in H , c\equiv 0 \MOD \ell  \right\}$$
and $\mathcal{H}$ the upper half complex plane. Let $X_H(\ell)$ be the modular curve defined as
$$X_H(\ell)=\Gamma_H(\ell)\backslash \mathcal{H}\cup \mathbb{P}^1(\mathbb{Q})$$
and $J_H(\ell)$ the Jacobian of $X_H(\ell)$. Then define
$V'_{k,\ell}=J_H(\ell)[\mathfrak{m}'_{k,\ell}]$, here $\mathfrak{m}'_{k,\ell}$ is generated by the same
Hecke operators as $\mathfrak{m}_{k,\ell}$ but with the $T_n - \tau_k(n)$ viewed as elements of the Hecke algebra for $\Gamma_H(\ell)$. Now $V'_{k,\ell}$ and $V_{k,\ell}$ will be isomorphic Galois representations so instead of working with $J_1(\ell)$, we compute in $J_H(\ell)$
using an explicit polynomial equation for $X_H(\ell)$. This trick makes the computation considerably faster if $d>2$,
especially when $d = k-2$. We list several examples below.
\renewcommand\arraystretch{1.5}
\tabcolsep=6.5pt
\begin{longtable}{|c|cc|cc|ccc|cc|cccc|cccc|}
 \caption{Comparing dimensions of $J_1(\ell)$, $J_H(\ell)$ and $A_{\Delta_{k,\ell}}$.}\label{comparingdims}\\
 \hline
    $k$     &12  & 12   & 16 & 16 &  18  & 18  &18   &20  &20    &22 &22  &22  &22  &26 &26  &26  &26   \\
 \hline
   $\ell$   &31  & 41   & 29 & 43 &  29  & 37  &41   &31  &37    &29 &31  &37  &41  &29 &31  &37  &41 \\
 \hline
   $G$      &3   &6     &2   &3   &2     &2    &6    &3   &2     &2  &3   &2   &6   &2  &3   &2   &6  \\
 \hline
 $H$&$3^3$&$6^4$ &$2^2$&$3^3$&$2^7$&$2^9$&$6^5$&$3^5$&$2^2$&$2^7$&$3^3$&$2^9$&$6^2$&$2^7$&$3^5$&$2^3$&$6^5$\\
 \hline
 $\dim J_1(\ell)$ &26&51  &22 &57 &22 &40 &51&26 &40      &22 &26 &40 &51 &22 &26 &40 &51\\
 \hline
 $\dim J_H(\ell)$ &{6}&{11} &{4}& {9}&{8}&{16}&{11}&{6}&{4}&{8}& {6}& {16}&{5}&{8}&{6}& {4}&{11}\\
 \hline
 $\dim A_{\Delta_{k,\ell}}$&{4}&{6} &{2}& {2}&{6}& {6}&{8}&{4}& {2}&{6}& {4}& {6}&{2}&{6}& {4}& {2}&{8}\\
 \hline
\end{longtable}

Each entry in the third and fourth row is a generator of the
cyclic group $G=(\mathbb{Z}/\ell\mathbb{Z})^\times$ and $H\subset G$. The
modular Abelian variety associated to newform $\Delta_{k,\ell}$ is denoted as $A_{\Delta_{k,\ell}}$.

We computed 9 cases with $\ell$ larger than $31$ (the previous record), including 5 cases with $\ell > 40$.
The smallest case we could not compute is $k=12, \ell=37$ because $d=2$ for that case.

Our computational results can be summarized as:
\begin{theorem}\label{mainthm}Let $\tilde\rho_{{k},\ell}$ be the projective representation associated to $\Delta_k$ mod $\ell$.
For each $(k,\ell)$ in Table \ref{Polysforprojectiverepns} (Section 4),
the fixed field of $\ker(\tilde\rho_{{k},\ell})$ is the splitting field of the polynomial $Q_{k,\ell}^{red}$ in Table~\ref{Polysforprojectiverepns}.
% (available online at \url{http://www.math.fsu.edu/~hoeij/files/XH/}).
% with Galois group $\textrm{PGL}_2(\mathbb{F}_\ell)$.
\end{theorem}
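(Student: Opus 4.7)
The plan is to prove each case $(k,\ell)$ listed in Table~\ref{Polysforprojectiverepns} by combining an explicit construction of $V'_{k,\ell}\subset J_H(\ell)$ with a rigorous \emph{a posteriori} verification of the resulting polynomial. The statement is a summary of a computation, and what actually requires proof is that the polynomial produced by the algorithm really does cut out the correct projective representation.

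First I would invoke the explicit plane model of $X_H(\ell)$ constructed in the earlier sections to locate the two-dimensional $\mathbb{F}_\ell$-subspace $V'_{k,\ell} = J_H(\ell)[\mathfrak{m}'_{k,\ell}]$ inside the Jacobian. Since $\tilde\rho_{k,\ell}$ permutes the $\ell+1$ lines of $\mathbb{P}(V'_{k,\ell})$, I would choose a Galois-equivariant function $f$ on $\mathbb{P}(V'_{k,\ell})$ that separates these lines, for example a symmetric function of the coordinates of a canonical effective representative divisor of a non-trivial point on the line. Taking the product $\prod_{L} (x - f(L))$ over the $\ell+1$ lines, clearing denominators, and applying the reduction procedure (removing spurious factors, extracting square parts, and minimising the polynomial over $\mathbb{Q}$) yields the candidate $Q_{k,\ell}^{red}$. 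By construction, provided $f$ genuinely separates lines, the splitting field of $Q_{k,\ell}^{red}$ is contained in the fixed field of $\ker(\tilde\rho_{k,\ell})$, and equality holds as soon as the Galois group of $Q_{k,\ell}^{red}$ is as large as possible.

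The main obstacle — and the actual substance of the proof — is to certify that the computed polynomial is correct, since the underlying computation is either numerical (in the Bosman/Mascot/Tian approach) or $p$-adic and CRT-based (in the Zeng approach), and \emph{a priori} only produces a heuristically correct candidate. I would verify the candidate in three steps: (i) confirm that $\Disc(Q_{k,\ell}^{red})$ is supported only on $\ell$ together with a short list of small primes accounted for by the choice of $f$, in agreement with $\tilde\rho_{k,\ell}$ being unramified outside $\ell$; (ii) for each prime $p$ below a computable Chebotarev-style bound, check that the factorisation type of $Q_{k,\ell}^{red}\bmod p$ matches the cycle type in $\mathrm{PGL}_2(\mathbb{F}_\ell)$ of an element with trace $\tau_k(p)\bmod\ell$ and determinant $p^{k-1}\bmod\ell$; (iii) confirm that the Galois group of the splitting field has the expected order — $|\mathrm{PGL}_2(\mathbb{F}_\ell)|$ in the generic case, or the predicted exceptional subgroup otherwise.

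Step (ii) is the key one: by effective Chebotarev, agreement of Frobenius conjugacy classes for all primes $p$ below an explicit bound forces the Galois representation realised on the roots of $Q_{k,\ell}^{red}$ to coincide with $\tilde\rho_{k,\ell}$, which together with step (iii) upgrades the inclusion from the construction above to an equality of fields. The hard part in practice is obtaining the polynomial with enough precision to identify it in $\mathbb{Q}[x]$, but once a candidate is in hand, the verification (i)–(iii) is a finite, deterministic computation that constitutes the proof.
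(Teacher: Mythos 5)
Your construction of the candidate polynomial and your step (iii) (checking that the Galois group is all of $\mathrm{PGL}_2(\mathbb{F}_\ell)$) match what the paper does, but the heart of your verification --- step (ii), an effective-Chebotarev comparison of Frobenius data --- is not how the paper proceeds, and as written it does not work. Two concrete problems. First, the factorisation type of $Q_{k,\ell}^{red}\bmod p$ only records the cycle type of $\tilde\rho(\Frob_p)$ acting on $\mathbb{P}^1(\mathbb{F}_\ell)$, i.e.\ essentially the order of the eigenvalue ratio; distinct conjugacy classes of $\mathrm{PGL}_2(\mathbb{F}_\ell)$ share the same cycle type, so matching factorisation types against the class predicted by $\tau_k(p)$ and $p^{k-1}$ is only a necessary condition and cannot ``force'' the two projective representations to coincide. (Pinning down the actual class requires resolvent data as in Dokchitser's theorem, which the paper uses only \emph{after} the proof, to compute $\tau_k(p)$.) Second, the comparison you invoke must be run on the compositum of the two fixed fields, a field of degree up to $|\mathrm{PGL}_2(\mathbb{F}_\ell)|^2$; the unconditional effective Chebotarev bound there is astronomically large (and even under GRH is far beyond the range of any such check), so your ``finite, deterministic computation'' is not actually executable. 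You never supply the bound, and no feasible bound exists.

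The paper replaces this global comparison with a purely local argument at $\ell$, following Bosman (Chapter 7 of Edixhoven et al.): after verifying $\mathrm{Gal}(Q_{k,\ell}^{red})\cong\mathrm{PGL}_2(\mathbb{F}_\ell)$, one lifts the resulting projective representation to a linear representation $\rho:\GQ\to\GL_2(\mathbb{F}_\ell)$ by Tate's theorem, and then shows $\rho$ has Serre level $1$ and weight $k$ by computing that the discriminant of $\mathbb{Q}[x]/(Q_{k,\ell}^{red}(x))$ is exactly $(-1)^{(\ell-1)/2}\ell^{k+\ell-2}$, using the Moon--Taguchi refinement of Tate's discriminant bound to convert this ramification data into the Serre invariants. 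Since Serre's conjecture is a theorem and $S_k(\SL_2(\mathbb{Z}))$ is one-dimensional for the relevant $k$, this identifies $\rho$ (up to twist) with the representation attached to $\Delta_k$, hence identifies the projective representations. This discriminant computation at the single prime $\ell$ is the idea missing from your proposal; your item (i) gestures at ramification but stops at ``supported only on $\ell$ and small primes,'' which is far weaker than the exact exponent $k+\ell-2$ that the argument actually needs.
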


Our data is available online \url{http://www.math.fsu.edu/~hoeij/files/XH} (for the readers convenience, the URL
also lists polynomials that were computed in prior works, with references).
The theorem implies that the Galois group of $Q_{k,\ell}^{red}$ is $\textrm{PGL}_2(\mathbb{F}_\ell)$, which
we verified with Magma \cite{Bosma}.
	% We also verified the discriminant of the number field $\mathbb{Q}[x]/(Q_{k,\ell}^{red}(x))$.
Initially we obtained polynomials $Q_{k,\ell}$ with large coefficients, the superscript ${red}$ indicates
a size-reduced polynomial defining the same number field (polredabs in PARI/GP).
	% One application of the above theorem is the following.
The explicit polynomials for $k=12$ and $\ell \in \{11,13,17,19,29,31,41\}$ allow one to efficiently compute $\tau(p)$ mod $\ell$ for huge values of $p$. Together with the known congruences for $\tau$ modulo powers of 2, 3, 5, 7, 23 and 691 we were able to verify Lehmer's non-vanishing conjecture for $\tau$ further
than before.
\begin{cor}\label{maincor}The non-vanishing of the Ramanujan tau function $\tau(n)$ holds for all $n$ with
$$n<816212624008487344127999\approx8\cdot10^{23}$$
\end{cor}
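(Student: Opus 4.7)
The plan rests on the multiplicativity of $\tau$: if $\tau(n)=0$, then $\tau(p^a)=0$ for some prime power $p^a \mid n$, so it suffices to show that the smallest such $p^a$ already exceeds $N := 816212624008487344127999$. The Hecke relation $\tau(p^{a+1})=\tau(p)\tau(p^a)-p^{11}\tau(p^{a-1})$ means that, modulo a prime $\ell \ne p$, vanishing of $\tau(p^a)$ is controlled by the pair $(\tau(p)\bmod\ell,\;p^{11}\bmod\ell)$ through a Chebyshev-like recurrence. Consequently, for each $\ell$ only a short, explicit list of exponents $a$ can possibly give $\tau(p^a)\equiv 0\pmod\ell$, and in particular ruling out $\tau(p)\equiv 0 \pmod\ell$ for sufficiently many $\ell$ typically rules out $\tau(p^a)=0$ for all $a \leq a_0$ with $p^{a_0}<N$.

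The congruence toolkit I would use combines the classical congruences for $\tau$ modulo suitable powers of $2, 3, 5, 7, 23$ and $691$ (for instance Ramanujan's $\tau(n)\equiv\sigma_{11}(n)\pmod{691}$ and its analogues, each of which reduces $\tau(p) \bmod \ell^r$ to a computable function of $p$) with the new mod-$\ell$ congruences for $\ell\in\{11,13,17,19,29,31,41\}$ furnished by Theorem~\ref{mainthm} in the case $k=12$. For each new $\ell$ and each prime $p\ne\ell$, I would factor $Q_{12,\ell}^{\mathrm{red}} \bmod p$, read off the cycle type of $\Frob_p$ in $\mathrm{PGL}_2(\mathbb{F}_\ell)$, and use the determinant relation $\det\rho_{12,\ell}(\Frob_p)\equiv p^{11}\pmod\ell$ to pin down $\tau(p)\bmod\ell$. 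This is fast (polynomial in $\log p$), so one can iterate it over a very large range of primes.

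To extract the numerical bound, I would run a sieve: for every prime $p$, test $\tau(p)\bmod\ell$ against $0$ for each $\ell$ in the combined list; a prime $p$ is eliminated as soon as some test is non-zero. The small set of primes surviving every classical sieve is exactly where the new $\ell\in\{11,\dots,41\}$ congruences become decisive, and each additional $\ell$ roughly multiplies the reachable bound. Finally, I would take all surviving prime-power candidates $p^a$, order them, and locate the smallest. The value $N=816212624008487344127999\approx8\cdot10^{23}$ should emerge as precisely this smallest un-eliminated candidate, so that every $n<N$ contains in its factorisation only $p^a$ with $\tau(p^a)\neq 0$.

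The main obstacle is not conceptual but computational and in the book-keeping: one must execute the mod-$\ell$ evaluations reliably for all prime candidates in a large interval and certify that the union of sieves leaves no $n<N$ unaccounted for. This is why the \emph{size-reduced} polynomials $Q_{12,\ell}^{\mathrm{red}}$ delivered by Theorem~\ref{mainthm} are essential: factoring a polynomial of degree $|\mathrm{PGL}_2(\mathbb{F}_\ell)|+1$ modulo vast numbers of primes $p$ is only feasible when the coefficients are small, and it is precisely the ability to handle $\ell$ up to $41$ that pushes the Lehmer non-vanishing bound from the previous record into the $10^{23}$ range.
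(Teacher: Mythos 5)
Your overall strategy is the one the paper uses: combine the classical congruences for $\tau$ modulo powers of $2,3,5,7,23,691$ with the new mod-$\ell$ information for $\ell\in\{11,13,17,19,29,31,41\}$ read off from the factorization type of $Q_{12,\ell}^{red}\bmod p$, and locate the first prime surviving every test. Two steps, however, need repair before this is a proof.

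First, the reduction from general $n$ to primes. You argue via a mod-$\ell$ Chebyshev-style recurrence that ruling out $\tau(p)\equiv 0\pmod\ell$ ``typically'' rules out $\tau(p^a)=0$; this is not a proof, and working modulo $\ell$ is the wrong place to do it. The clean statement is Lehmer's integral one: if $\tau(p^a)=0$ then the ratio $\alpha/\beta$ of the roots of $X^2-\tau(p)X+p^{11}$ is a root of unity of order dividing $a+1$, hence of order $1,2,3,4$ or $6$; the cases other than order $2$ force $\tau(p)^2\in\{4p^{11},p^{11},2p^{11},3p^{11}\}$, all impossible, so $\tau(p^a)=0$ implies $\tau(p)=0$. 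Thus the least $n$ with $\tau(n)=0$ is prime, and it genuinely suffices to bound the least vanishing prime. The paper takes this reduction as known.

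Second, and more seriously, your sieve is run in the wrong direction: ``for every prime $p$, test $\tau(p)\bmod\ell$'' over the range up to $8\cdot 10^{23}$ means examining roughly $10^{22}$ primes, which is not feasible no matter how small the coefficients of $Q_{12,\ell}^{red}$ are. The computation only works because Serre's criterion (from the classical congruences) is used \emph{generatively}: any prime with $\tau(p)=0$ must satisfy $p=hM-1$ with $M=2^{14}3^75^3691\approx 3\cdot 10^{12}$, together with $\left(\frac{h+1}{23}\right)=1$ and $h\bmod 49\in\{0,30,48\}$. One therefore enumerates the $\sim 2.6\cdot 10^{11}$ values of $h$, discards most by the mod $49$ and mod $23$ conditions, and only then applies the seven degree-$2$-factor tests (each cutting the survivors by roughly a factor $\ell$), arriving at $816212624008487344127999$ as the first prime passing everything. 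A minor further point: the cycle type of $Q_{12,\ell}^{red}\bmod p$ does not in general ``pin down $\tau(p)\bmod\ell$'' (that requires Dokchitser's resolvents as in Theorem~\ref{Dokchitser's method}), but for the sieve you only need the special case the paper isolates as its Lemma: $\tau(p)\equiv 0\pmod\ell$ iff the projective Frobenius has order $2$, iff $Q_{12,\ell}^{red}\bmod p$ has an irreducible quadratic factor. That part of your argument is fine.
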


\begin{remark} In \cite{ZengYin} this was verified for
$$n<982149821766199295999\approx9\cdot10^{20}.$$
\end{remark}

Section~2 will discuss finding plane models of modular curves $X_H(\ell)$.
Section~3 explains our method for constructing the space $V_{k,\ell}$
and our computational results are summarized in Section~4.

\section{Equations for modular curves $X_H(\ell)$}
The modular curve $Y_1(N):=X_1(N) - \{\textrm{cusps}\}$ parameterizes  isomorphism classes of pairs $(E,P)$ where $E$ is an elliptic curve and $P$ is an $N$-torsion point on $E$. Equations for $X_1(N)$  have been extensively studied by many authors. The approach in Reichert \cite{Reichert}, Baaziz \cite{Baaziz} and Sutherland \cite{Sutherland} not only gives an equation for $X_1(N)$ but they also describe the moduli interpretation in terms of the equation, i.e. they describe how pairs $(E,P)$ correspond to solutions of their equations. This moduli interpretation enables one to compute the action of Hecke operators on points of the equation for $X_1(N)$. More precisely, let $E$ be an elliptic curve over $\mathbb{Q}$ and $P$ a point on $E$ of order exactly $N$.
If $N > 3$, then each pair $(E,P)$ can be represented uniquely in Tate normal form:
\begin{equation}\label{Tatenormalform}
E_{b,c}:Y^2+(1-c)XY-bY=X^3-bX^2,\textrm{ with the point } (0,0) \textrm{ of order exactly } N.
\end{equation}
So $b,c$ can be viewed as  functions on $X_1(N)$ and the function field $\mathbb{Q}(X_1(N))$ is generated by $b,c$.
A polynomial relation between $b,c$ denoted by $F_N \in \mathbb{Z}[b,c]$ gives a plane equation for $X_1(N)$. Let
$n\ge1$ be an integer, the Hecke operator $T_n$ acts on a point $(E_{b,c},(0,0))$ as
$$T_n(E_{b,c},(0,0)):=\sum_C(E_{b,c}/C,(0,0)+C)$$
where the sum is taken over all order $n$ subgroups $C \subset E_{b,c}$ such that $C \cap\langle(0,0)\rangle=\{0_{E_{b,c}}\}$. Each pair $(E_{b,c}/C,(0,0)+C)$ can be represented in Tate normal form (\ref{Tatenormalform}) as well. Hence, we can compute the action of Hecke operators on points of $F_N$.

Let $H$ be a subgroup of $(\mathbb{Z}/N\mathbb{Z})^\times$. In \cite{Derickx} Derickx and van Hoeij
give an example for computing an equation for $X_H(N)$.
Here we extend this to an algorithm for finding equations for $X_H(N)$.
We also describe the moduli interpretation for this equation so that we can apply Hecke operators on its points.

	% The discriminant of (\ref{Tatenormalform}) is $\Delta:=b^3(16b^2+(1-20c-8c^2)b+c(c-1)^3)$. Define
	% $$F_2:=\frac{b^4}{\Delta}=\frac{b}{16b^2+(1-20c-8c^2)b+c(c-1)^3}\textrm{ and } F_3:=b.$$
As mentioned, $F_N$ (for $N \ge 4$) is a equation for $X_1(N)$.
The size of $F_N$ increases drastically with $N$.
For $N \ge 6$ one can find smaller polynomials for $X_1(N)$ as follows, define (see \cite{Sutherland}):
$$r=\frac{b}{c},~s=\frac{c^2}{b-c},~b=rs(r-1),~c=s(r-1)$$
and for $N\ge 10$, define
$$x=\frac{s-r}{rs-2r+1},~y=\frac{rs-2+1}{s^2-s-r+1},~r=\frac{x^2y-xy+y-1}{x(xy-1)},~s=\frac{xy-y+1}{xy}.$$
Writing an equation for $X_1(N)$ as a polynomial in $r,s$ (if $6 \le N \le 9$) or $x,y$ (if $N \ge 10$) reduces expression sizes.
We denote this polynomial as $f_N$. Then $f_4:=c$, $f_5:=b-c$, $f_6:=s-1$, $f_7:=s-r$, $f_8:=rs-2r+1$, $f_9:=s^2-s-r+1$,
$f_{10}:=x-y+1$, $f_{11}:=x^2y-xy^2+y-1$, etc.
Explicit expressions for $f_{10},\ldots,f_{189}\in\mathbb{Z}[x,y]$ can be downloaded from Sutherland's website.
We also define $f_2 := {b^4}/{\Delta}$ and $f_3 := b$,  where $\Delta:=b^3(16b^2+(1-20c-8c^2)b+c(c-1)^3)$ is the discriminant of~(\ref{Tatenormalform}).
If $1<k<N$, then $f_k$ is a modular unit for $X_1(N)$ (see \cite{Derickx}).

In the rest of this section, $\lfloor N/2\rfloor$ is denoted by $n$. There are $n+1 $ $\GQ$-orbits of cusps on $X_1(N)$, denoted as $C_0,\ldots,C_n$.
We number them in such as a way that the diamond operator $\langle d\rangle$ sends $C_i$ to $C_j$ where $j\equiv \pm d\cdot i\mod N$. Write
\begin{equation}\label{div_f_k}
\textrm{div}(f_k)=\sum_{0\le i\le n}a_{k,i}C_i
\end{equation}
with $a_{k,i} \in \mathbb{Z}$. The vector $\left(a_{k,i}\right)_{0\le i\le n}$ has been computed in \cite{Derickx}
and is available online for $N \le 300$ and $2 \le k \le n+1$.
Let $M$ be the $n\times (n+1)$ matrix $$(a_{k,i})_{2\le k\le n+1,~0\le i\le n},$$
then
$$(\textrm{div}(f_k))^t_{2\le k\le n+1}=M\cdot(C_0,\ldots,C_{n})^t$$
where $(\textrm{div}(f_k))^t$ is the transpose of row vector $(\textrm{div}(f_k))$.
It is conjectured in \cite{Derickx} that $f_2,\ldots,f_{n+1}$ generate
the group of $\mathbb Q$-rational modular units of $X_1(N)$. This conjecture has been verified for $N\le 100$. So if $N\le 100$, then for any diamond operator $\langle d\rangle$, $\langle d\rangle f_k$ can be represented as (up to a constant in $\mathbb{Q}^\times$)
$$\langle d\rangle f_k=\prod_{2\le i\le n+1}f_i^{e_i},$$
where $e_i\in\mathbb{Z}$ can be determined with linear algebra:
Using (\ref{div_f_k}), compute $m_{k,i} \in \mathbb{Z}$ such that
$$\langle d \rangle\textrm{div}(f_k)=\sum_{0\le i\le n}m_{k,i}C_i.$$
So
$$\langle d \rangle\textrm{div}(f_k)=(e_i)_{2\le i\le n+1}\cdot (\textrm{div}(f_k))^t_{2\le k\le n+1},$$
where $(e_i)_{2\le i\le n+1}$ is the unique vector satisfying
\begin{equation}\label{inverse}
(e_i)_{2\le i\le n+1}\cdot M=(m_{k,i})_{0\le i\le n}.
\end{equation}

For $N\le100$, let $\mathcal{F}_N:=\langle f_2,f_3,\ldots,f_{n+1}\rangle$ be the group of $\mathbb Q$-rational modular units of $X_1(N)$. We have an explicit embedding
\begin{equation}\label{embedding}
\varphi:\mathcal{F}_N\to \mathbb{Z}^{n+1},~f_k\mapsto(a_{k,i})_{0\le i\le n},~2\le k\le n+1,
\end{equation}
and $\varphi(\mathcal{F}_N)$ is a submodule of $\mathbb{Z}^{n+1}$ of rank $n$, denote it by $\mathcal{L}_N$. So we have an isomorphism
\begin{equation}\label{isoofmodunits}
\varphi:\mathcal{F}_N\to \mathcal{L}_N.
\end{equation}
The inverse map of $\varphi$ is given by (\ref{inverse}), i.e. let $w\in \mathcal{L}_N$ then $\varphi^{-1}(w)$ is the unique vector $v$ such that $v\cdot M=w$.
Let $\mathcal{L}_{N,H}$ be the submodule of $\mathcal{L}_N$ consisting of elements that
are invariant under the action of the diamond operators in $H$
$$\mathcal{L}_{N,H}=\bigcap_{d\in H}\ker(\langle d\rangle-1,\mathcal{L}_N).$$
The inverse image $\varphi^{-1}(\mathcal{L}_{N,H})$ is the group of modular units for $X_H(N)$, denote it by $\mathcal{F}_{N,H}$.

If $\mathcal{L}_{N,H}$ has rank at least two, then pick two independent vectors $(v_{1,i})_{0\le i\le n}$,
$(v_{2,i})_{0\le i\le n}$ in $\mathcal{L}_{N,H}$.
Let $X(x,y),Y(x,y)$ be the corresponding elements of $\mathcal{F}_{N,H}$, then
$$\textrm{div}(X)=\sum_{0\le i\le n}v_{1,i}\cdot C_i\textrm{ , }\textrm{div}(Y)=\sum_{0\le i\le n}v_{2,i}\cdot C_i$$
and
$$\deg(X)=\frac{1}{2}\sum_{0\le i\le n}|v_{1,i}|\cdot\deg(C_i)\textrm{ , } \deg(Y)=\frac{1}{2}\sum_{0\le i\le n}|v_{2,i}|\cdot\deg(C_i).$$
The degrees of $X$, $Y$  viewed as functions in $X_H(N)$ are $d_1:=\frac{\deg(X)}{h}$, $d_2:=\frac{\deg(Y)}{h}$ respectively, where $h=|H/\{\pm1\}|$ is the index of $X_H(N)$ in $X_1(N)$.
Assuming $X,Y$ generate $X_H(N)$, a
polynomial relation between $X$ and $Y$ gives a plane equation for $X_H(N)$, and this polynomial has degree $d_2$ in $X$ and degree $d_1$ in $Y$. We prefer low degrees, so we use LLL to select $(v_{1,i}),~(v_{2,i})$.
	% In order to do this, we assume there are $c_{i,j}\in \mathbb{Q}$ and an integer $m$ such that
Write the polynomial relation between $X$ and $Y$ as:
\begin{equation}\label{FXY}
\sum_{i\le d_2,~j\le d_1}c_{i,j}\cdot X(x,y)^i\cdot Y(x,y)^j=0\textrm{ in } \mathbb{Q}(X_1(N)).
\end{equation}
Let $x:=x_0$ be an integer and $\alpha$ a root of the polynomial $f_N(x_0,y)=0$.
% without loss of generality we assume $f_N(x_0,y) \in \mathbb{Q}[y]$ is irreducible).
Then
\begin{equation}\label{FX0Y0}
\sum c_{i,j}\cdot X(x_0,\alpha)^i\cdot Y(x_0,\alpha)^j=0.
\end{equation}
Now reduce each $\alpha^k$, $k\ge \deg d$  (where $d = [\mathbb{Q}(\alpha):\mathbb{Q}]$)
to a linear combination of $\alpha^k$, $0\le k< d$. The coefficients of $\alpha^k$, $0\le k< d$ in (\ref{FX0Y0})
should then be 0, giving $d$ $\mathbb{Q}$-linear equations for the variables $c_{i,j}$.
Picking integer values $x = x_0,x_1,x_2,\ldots$ produces as many equations as needed; we keep adding equations for the $c_{i,j}$ until
the solution space has dimension 1.

We now summarize the above discussion into an algorithm. To implement Step~1,
first download the divisors \cite{Derickx} of the generators $f_2,f_3,\ldots$ of $\mathcal{F}_N$.
\begin{alg}\label{equtaionforXH}Find an equation for $X_H(N)$.

Input: An integer $N\ge4$ and a subgroup $H$ of $(\mathbb{Z}/N\mathbb{Z})^\times$.

Output: A polynomial defining $X_H(N)$, or ``no output''.
\begin{itemize}
\item[1.] Compute the group of modular units $\mathcal{F}_{N,H}$ for $X_H(N)$ and the lattice $\mathcal{L}_{N,H}$.

\item[2.] Pick two linearly independent vectors $v_{1},v_{2}\in \mathcal{L}_{N,H}$ and compute the degrees $d_1, d_2$ and the inverses $X(x,y) :=\varphi^{-1}(v_{1})$ and $Y(x,y) := \varphi^{-1}(v_{2})\in \mathcal{F}_{N,H}$.

\item[3.]  Set $x_0:=2$ and $M:=\emptyset$.
\begin{itemize}
\item[3.1.]Let $\alpha$ be a root of $f_N(x_0,y)=0$, extract $[\mathbb{Q}(\alpha):\mathbb{Q}]$ linear equations from
$$\sum_{i\le d_2,~j\le d_1}c_{i,j}\cdot X(x_0,\alpha)^i\cdot Y(x_0,\alpha)^j=0.$$

\item[3.2.]Update the set of linear equations $M\leftarrow M\cup\{\textrm{new linear equations}\}$.

\item[3.3.]If  the solution space of $M$ has dimension greater than 1, then set $x_0\leftarrow x_0+1$ and go to step 3.1. Otherwise continue with step 3.4.

\item[3.4.]Pick a non-zero solution $(c_{i,j})$ and compute the genus of:
$$f_H(x,y):=\sum_{i\le d_2,~j\le d_1}c_{i,j}\cdot x^i\cdot y^j.$$
If this matches the genus of $X_H(N)$, then return $f_H(x,y)$, otherwise ``no output''.
\end{itemize}
\end{itemize}
\end{alg}
\begin{remark}
The algorithm is not guaranteed to always find an equation. Step~2 can only succeed if the
rank of $\mathcal L_{N,H}$ is at least 2. But even if it is, the two modular units corresponding to $v_1$ and $v_2$
need not generate $\mathbb Q(X_H(N))$. To check this, we compute the
genus in step 3.4, to ensure that if the algorithm returns an equation, then it will be correct.
\end{remark}
\begin{remark}Let $\mathcal{L}'_{N,H}$ be the lattice constructed from $\mathcal{L}_{N,H}$ by multiplying each entry with the degree of the corresponding cusp. The first two elements of an LLL basis of $\mathcal{L}'_{N,H}$ often
give modular units of $X_H(N)$ with optimal\footnote{Some $X_H(N)$ have functions of lower degree, but not modular units.} degrees.
In all cases we tried, their
polynomial relation had small coefficients as well; see Section~4 Table~\ref{equationforXHell} for
the equations ($f_H$ is denoted by $f_{N,[G:H]}$) of the $X_H(N)$ listed in Section~1 Table~\ref{comparingdims}.
\end{remark}

\subsection{The moduli interpretation for the plane model of $X_H(N)$}
Over an algebraically closed field $k$,
non-cuspidal points on $X_H(N)$ correspond to pairs $(E,S)$ where $E/k$ is an elliptic curve and $S$ is an $H$ orbit of points of
order $N$ on $E$. To apply Hecke operators on $J_H(N)$ we need this correspondence explicitly.
If we know the $x,y$ coordinates of a point $s$ on $X_1(N)$ then we also know its $b,c$ coordinates,
and the curve $E_{b(s),c(s)}$ with the point $(0,0)$ as in equation \ref{Tatenormalform}
will be the moduli interpretation of $s$. Let $\Pi: X_1(N) \to X_H(N)$ be the quotient map,
we can obtain the moduli interpretation of a point $s$ on $X_H(N)$ by computing a point $s' \in \Pi^{-1}(s)$
and then taking $(E,S)$ to be $(E_{b(s'),c(s')}, H(0,0))$. So we need to compute inverse images under $\Pi$.

Let $X(x,y)=\prod_{2\le i\le n+1}f_i(x,y)^{e_i}$ and $Y(x,y)=\prod_{2\le i\le n+1}f_i(x,y)^{g_i}$ then we can find
a polynomial relation between
the two elements in each pair $(X,x)$, $(X,y)$, $(Y,x)$ and $(Y,y)$ using resultants.
For example, let $N(x,y)$ and $D(x,y)$ be the numerator and denominator of $X(x,y)$ respectively, the polynomial relation (denoted by $P_{Xx}$) between $X$ and $x$ can be determined by computing
the resultant of the multivariate polynomials $D(x,y)\cdot T-N(x,y)$ and $f_N(x,y)$ with respect to the variable $y$
(for large $\ell$, we used a combination of evaluation and interpolation, like Algorithm~\ref{equtaionforXH}).  
These polynomials $P_{Xx},P_{Xy},P_{Yx}$ and $P_{Yy}$ will help us compute inverse images under $\Pi$.

\section{Constructing the representation space $V_{k,\ell}$ }
For each $k\in\{12,16,18,20,22,26\}$, let $\Delta_k=\sum_{n\ge1}\tau_k(n)q^n$ be the unique newform in $S_k(\SL_2(\mathbb{Z}))$ and $\Delta_{k,\ell}$ the newform in $S_2(\Gamma_1(\ell))$ which is congruent to $\Delta_k$ modulo $\ell$. Let $\ell>k$ be a prime number.
Associated to $\Delta_k$ there is a mod-$\ell$ Galois representation
$$\rho_{{k},\ell}:\GQ\to\textrm{Aut}(V_{k,\ell})\cong\GL_2(\mathbb{F}_\ell),$$
where $V_{k,\ell}=J_H(\ell)[\mathfrak{m}_{k,\ell}]$ ia a two-dimensional $\mathbb{F}_\ell$-vector space. More precisely, we have $V_{k,\ell}=A_{\Delta_{k,\ell}}[\mathfrak{m}_{k,\ell}]$. We can first construct points on $A_{\Delta_{k,\ell}}$ as follows. Let $\prod_f' A_f$ be a product of Abelian varieties, where $f$ runs through a set of representatives of $\GQ$-orbits of newforms in $S_2(\Gamma_1(\ell))$ with Dirichlet characters trivial on $H$, excluding the orbit of $\Delta_{k,\ell}$. For any Hecke operator $T_n$, denote by $\phi_n(x)$ the characteristic polynomial of $T_n$ on $\prod_f' A_f$, then we have a map \begin{equation}\label{phi_n}
\phi_n:J_H(\ell)\to A_{\Delta_{k,\ell}},~P\mapsto \phi_n(T_n)(P).
\end{equation}
Similarly, given $\ell$-torsion points on $A_{\Delta_{k,\ell}}$, we can construct $\ell$-torsion points on $V_{k,\ell}$ as follows. Let $\mathcal{S}$
be a set of positive integers, satisfying that $\ell$ and the $T_n-\tau_k(n)$, $n\in\mathcal{S}$ generate $\mathfrak{m}_{k,\ell}$. For every positive integer $n\in\mathcal{S}$, let $B_n(x)$ be the characteristic polynomial of $T_n$ on $A_{\Delta_{k,\ell}}$, then $B_n(x)$ can be factored as
\begin{equation}
B_n(x)=A_n(x)(x-\tau_k(n))\mod\ell.
\end{equation}
Define a composite map as
\begin{equation}\label{pi_S}
\pi_\mathcal{S}:A_{\Delta_{k,\ell}}[\ell]\to A_{\Delta_{k,\ell}}[\ell], ~P\mapsto \left(\prod_{n\in\mathcal{S}}A_n(T_n)\right)(P).
\end{equation}
Then for each point $P\in A_{\Delta_{k,\ell}}[\ell]$, $\pi_\mathcal{S}(P)$ is annihilated by $T_n-\tau_k(n)$ for all $n\in\mathcal{S}$. In other words, we have a map
$$\pi_\mathcal{S}:A_{\Delta_{k,\ell}}[\ell]\to V_{k,\ell}.$$

It seems hard to compute nonzero $\ell$-torsion points in $A_{\Delta_{k,\ell}}[\ell](\overline{\mathbb{Q}})$ directly.
%% or $A_{\Delta_{k,\ell}}[\ell](\overline{\mathbb{Q}}_p)$.
%% --->  Wait, over the p-adics, that's not much harder than mod p, you'd need to implement Hensel
%% lifting. The main amount of work with that is that one would have to modify Hess' algorithm so that it
%% will operate mod prime-powers.  But mathematically, it's not much harder than what we already do now.
It is easier to find points in $A_{\Delta_{k,\ell}}[\ell](\overline{\mathbb{F}}_p)$. So to construct
$V_{k,\ell}$, we first compute $V_{k,\ell}\mod p$ for sufficiently many
small prime numbers $p$ and then reconstruct $V_{k,\ell}$ with the Chinese Remainder Theorem.

Let $p$ be a prime number.
Elements of the two dimensional $\mathbb{F}_\ell$-vector space $V_{k,\ell}\mod p$ can be constructed as follows. Let 
$\mathbb{F}_q$ be a finite extension of $\mathbb{F}_p$ for which
$V_{k,\ell}(\overline{\mathbb{F}}_p)=V_{k,\ell}(\mathbb{F}_q)$. We have a map
\begin{equation}\label{pi_k_ell}
\pi_{k,\ell}:J_H(\ell)(\mathbb{F}_q)\xrightarrow{\phi_n} A_{\Delta_{k,\ell}}(\mathbb{F}_q)\xrightarrow{\psi} A_{\Delta_{k,\ell}}(\mathbb{F}_q)[\ell]\xrightarrow{\pi_{\mathcal{S}}}V_{k,\ell}\mod p,
\end{equation}
where $\phi_n$ is the map from (\ref{phi_n})
with some positive integer $n \ge 2$ (usually we take $n=2$, see \cite{ZengYin} for more)
and $\pi_{\mathcal{S}}$ is the map from (\ref{pi_S}). Define:
\begin{equation} \label{psi}
\psi:A_{\Delta_{k,\ell}}(\mathbb{F}_q)\rightarrow A_{\Delta_{k,\ell}}(\mathbb{F}_q)[\ell],~P\mapsto N_P \cdot P
\end{equation}
where $N_P$ is a divisor of $|A_{\Delta_{k,\ell}}(\mathbb{F}_q)| / \ell$
with minimal $\ell$-valuation for which $\ell \cdot N_P \cdot P$ vanishes.

Computing the map $\pi_{k,\ell}$ now comes down to computing the action of Hecke operators on $J_H(\ell)(\mathbb{F}_q)$,
as follows. Let $O$ be a $\mathbb{Q}$-rational cusp of $X_H(\ell)$, which serves
as the origin of the Jacobi map. The reduction modulo $p$ of $O$ is an
$\mathbb{F}_p$-rational point of $X_H(\ell)_{\mathbb{F}_p}$, denoted by $O$ as well.
Every point of $J_H(\ell)(\mathbb{F}_q)$ is represented as $P:=\sum_{i=1}^d P_i-gO$, where each $P_i$ is a place of $X_H(\ell)_{\mathbb{F}_q}$ and $g=\dim J_H(\ell)$.
Computing the action
of a Hecke operator $T_n$ on $P$ splits into three parts:
(1) compute $R_1:=\sum_{i=1}^d T_n(P_i)$,
(2) compute $R_2:=T_n(O)$,
(3) represent $R_1-gR_2$ as $Q:=\sum_{i=1}^{h}Q_i-gO$. Here we only explain part(1) in detail, as
part(2) can be found in \cite{ZengYin} and part(3) is realized with He\ss's algorithm \cite{Hess}.

Notations as in Section 2, let $f_\ell(x,y)$ be a defining equation for $X_1(\ell)$, and
$f_H(X,Y)$ a defining equation for  $X_H(\ell)$. Using these plane models, we have a map $\Pi:X_1(\ell)\to X_H(\ell)$,
$(x,y)\mapsto (X(x,y),Y(x,y))$. Given a point $(X_0,Y_0)$ with $f_H(X_0,Y_0)=0$, there are $h:=|H/\{\pm1\}|$ points above $(X_0,Y_0)$, which are denoted as $(x_{i},y_{i})$, $1\le i\le h$. We obtain $(x_i,y_i)$, $1\le i\le h$ by solving the equations
\begin{equation}
f_\ell(x,y)=0,~ X(x,y)=X_0, ~Y(x,y)=Y_0,
\end{equation}
and
\begin{equation}
P_{Xx}(X_0,x)=0,~ P_{Xy}(X_0,y)=0, ~P_{Yx}(Y_0,x)=0, ~P_{Yy}(Y_0,y)=0.
\end{equation}
The action of $T_n$ on $(X_0,Y_0)$ is computed as follows. Let $(x_0,y_0)$ be any of the $(x_{i},y_{i}), 1\le i\le h$
% and $n$ a prime number
and $(b_0,c_0)$ its $(b,c)$ coordinates.
Let $n$ a prime number,
then $T_n(x_0,y_0)$ can be computed by the formula
$$T_n(E_{b_0,c_0},(0,0))=\sum_{C}(E_{b_0,c_0}/C,(0,0)+C)$$
where the sum is taken over all order $n$ subgroup $C \subset E_{b,c}$ such that $C \cap\langle(0,0)\rangle=\{0_{E_{b,c}}\}$.
Let $S$ be the set of points occuring in the summation above, then:
$$T_n(X_0,Y_0)=\sum_{s\in S}\Pi(s) \in J_H(\ell).$$

So using the map $\Pi:X_1(\ell)\to X_H(\ell)$ and He\ss's algorithm, we can compute $\pi_{k,\ell}(P)\in V_{k,\ell}(\mathbb{F}_q)$ for every $P\in J_H(\mathbb{F}_q)$ explicitly. Since $\dim_{\mathbb{F}_\ell}V_{k,\ell}=2$, a basis of $V_{k,\ell}$ can be found without difficulty.

We now explain how to determine the minimal extension field $\mathbb{F}_q=\mathbb{F}_{p^{d_p}}$ such that $V_{k,\ell}(\overline{\mathbb{F}}_p)=V_{k,\ell}(\mathbb{F}_q)$. The characteristic polynomial of the Frobenius endomorphism $\Frob_p$ on $V_{k,\ell}$ is
$$X^2-\tau_k(p)X+p^{k-1}\in \mathbb{F}_\ell[X].$$
So we have
$$d_p\le \min\{d\ge1:X^d=1 \textrm{ in } \mathbb{F}_\ell[X]/(X^2-\tau_k(p)X+p^{k-1})\}$$
and the equality holds if $\tau_k(p)^2-4p^{k-1}\not=0\in \mathbb{F}_\ell$. In practice, we would like to chose those prime numbers with small extension degree $d_p$ (more precisely, with small value $p^{d_p}$).
The exact value of $\tau_k$ at small (e.g. $p<10^7$) primes $p$ can be computed using the Fourier expansion of Eisenstein series,
$$\Delta_{12}=\frac{E_4^3-E_6^2}{1728} \textrm{ and } \Delta_{k}=E_{k-12}\cdot\Delta_{12} \textrm{ for } k\in\{16,18,20,22,26\}.$$
For even $k\ge4$, the weight $k$ level one Eisenstein series is defined as
$$E_k=1-\frac{2k}{B_k}\sum_{n=1}^\infty\sigma_{k-1}(n)q^n$$
where $\sigma_{k-1}(n)$ is the sum of the $(k-1)$th powers of the positive divisors of $n$, and $B_k$ is the $k$th Bernoulli number.

\begin{example}The case $k=16$ and $\ell=29$. As described in Section 1, the representative space of the mod-29 representation associated to $\Delta_{16}$ is $V_{16,29}=J_H(29)[\mathfrak{m}_{16,29}]$, where $H=\langle2^2\rangle\subset(\mathbb{Z}/29\mathbb{Z})^\times$. The Jacobian variety $J_H(29)$ has dimension 4 with an isogenous decomposition $J_H(29)\sim A_1\times A_2$, where $\dim A_1=2$ and $\dim A_2=2$. The $q$-expansion of newforms  associated to $A_1$ and $A_2$ are

$f_1=q + \alpha q^2 - \alpha q^3 + (-2\alpha - 1)q^4 - q^5 + (2\alpha - 1)q^6 + (2\alpha + 2)q^7 + (\alpha - 2)q^8  + \textrm{O}(q^{9})$,

$f_2=q + \beta q^2 - \beta q^3 - 3q^4 - 3q^5 + 5q^6 + 2q^7 - \beta q^8 - 2q^9 - 3\beta q^{10} + \beta q^{11} + \textrm{O}(q^{12})$\\ respectively. Here $\alpha$ is a root of $x^2+2x-1$ and $\beta$ is a root of $x^2+5$. The reductions  of $f_2$ modulo 29 are

$f_{2,1}=q + 16q^2 + 13q^3 + 26q^4 + 26q^5 + 5q^6 + 2q^7 + 13q^8 + 27q^9 + 10q^{10} + 16q^{11} + \textrm{O}(q^{12})$,

$f_{2,2}=q + 13q^2 + 16q^3 + 26q^4 + 26q^5 + 5q^6 + 2q^7 + 16q^8 + 27q^9 + 19q^{10} + 13q^{11} + \textrm{O}(q^{12}).$\\
We can check that $\Delta_{16}\mod 29=f_{2,2}$ and $\mathfrak{m}_{16,29}=\langle 29,T_2-\tau_{16}(2)\rangle$. Let $p=18443$ be a prime number, then $d_p=2$, $V_{16,29}(\overline{\mathbb{F}}_p)=V_{16,29}({\mathbb{F}_{p^2}})$, $|A_2({\mathbb{F}_{p^2}})|= 2^4\cdot3^6\cdot5^2\cdot7^2\cdot29^4\cdot107^2$ and the map $\pi_{k,\ell}$ (\ref{pi_k_ell}) is
$$\pi_{16,29}:J_H(29)(\mathbb{F}_{p^2})\xrightarrow{T_2^2+2T_2-1}A_2(\mathbb{F}_{p^2})
\xrightarrow{\times 404460 }A_2(\mathbb{F}_{p^2})[29]\xrightarrow{T_2+13}V_{16,29}({\mathbb{F}_{p^2}}).$$
To get a 29-torsion point in $A_2({\mathbb{F}_{p^2}})$ we first multiply a point in $A_2({\mathbb{F}_{p^2}})$ by
$404460 = 2^2\cdot3^3\cdot5\cdot7\cdot107$ ($\approx \sqrt{|A_2({\mathbb{F}_{p^2}})|}$).
The result has $29^*$-torsion. To get a $29$-torsion point, multiply by a suitable power of 29.
This approach worked well in all cases.
\end{example}

Instead of using a low degree function of $\mathbb{Q}(X_H(\ell))$ (see \cite{ZengYin}), we follow the method proposed by Mascot \cite{Mascot} to construct a function $\iota:V_{k,\ell}(\overline{\mathbb{Q}})\to \overline{\mathbb{Q}}$ such that $\sigma(\iota(x))=\iota(\sigma(x))$ for any $x\in V_{k,\ell} - \{0\}
$ and $\sigma\in \GQ$.
Mascot's method significantly reduces the coefficient sizes of:
$$P_{k,\ell}(X):=\prod_{x\in V_{k,\ell} - \{0\}}(X-\iota(x))\in\mathbb{Q}[X]$$
and
$$Q_{k,\ell}(X):=\prod_{L\in\mathbb{P}( V_{k,\ell})}(X-\sum_{x\in L - \{0\}}\iota(x))\in\mathbb{Q}[X].$$

\noindent \begin{longtable}{|c|c|c|c|c|}
\caption{\#digits of largest coefficient of $P_{k,\ell}(X)$ and $Q_{k,\ell}(X)$, original/Mascot}\label{comparingsizes} \\
 \hline
    $(k,\ell)$     &(12,13)  & (12,17)  &  (12,19) & (12,31)  \\
 \hline
   $P_{k,\ell}(X)$  &69/41      & 367/168       &  685/228      & ?/815 \\
 \hline
   $Q_{k,\ell}(X)$  &24/17       &215/72         &407/100         &1275/336   \\
 \hline
\end{longtable}

For each nonzero point $x\in V_{k,\ell}(\overline{\mathbb{Q}})$, let $D_x$ be the reduction of $x$ along the origin $O$, i.e. $D_x=x+\theta(x)O$, where $\theta(x)$ is the smallest integer such that $x+\theta(x)O$ is effective linearly equivalent to an effective divisor.  Then the Riemann-Roch space $\mathcal{L}(D_x):=\{f\in\overline{\mathbb{Q}}(X_H(\ell))^\times:\textrm{div}(f)+D_x\ge0\}\cup\{0\}$ has dimension one. Let $f_x$ be a nonzero element of  $\mathcal{L}(D_x)$. Fix two $\mathbb{Q}$-rational cusps of $X_H(\ell)$, denote as $O_1$ and $O_2$. We further assume that $\{O_1,O_2\}\cap\textrm{supp}(f_x)=\varnothing$ for all nonzero point $x\in V_{k,\ell}(\overline{\mathbb{Q}})$. Now we have a well-defined function
$$\iota:V_{k,\ell} - \{0\} \to \overline{\mathbb{Q}},~x\mapsto \frac{f_x(O_1)}{f_x(O_2)}$$
satisfying $\sigma(\iota(x))=\iota(\sigma(x))$, $\forall x\in V_{k,\ell} - \{0\}$, $\sigma\in\GQ$.

Let $p$ be a $\mathfrak{m}_{k,\ell}$-good prime (see \cite{Bruin}). Then for any nonzero point $\tilde{x}\in V_{k,\ell}(\overline{\mathbb{F}}_p)$, we have $\tilde{D}_x=\tilde{x}+\theta(\tilde{x})\tilde{O}$, where $\tilde{D}_x$, $\tilde{x}$ and $\tilde{O}$ are the reduction modulo $p$ of ${D}_x$, ${x}$ and ${O}$, respectively. Similarly, $\mathcal{L}(\tilde{D}_x):=\{f\in\overline{\mathbb{F}}_p(X_H(\ell))^\times:\textrm{div}(f)+\tilde{D}_x\ge0\}\cup\{0\}$ has dimension one. Let  $\tilde{f}_x$ be a nonzero element of $\mathcal{L}(\tilde{D}_x)$, we have
$$\tilde{\iota}:V_{k,\ell}(\overline{\mathbb{F}}_p) - \{0\} \to \overline{\mathbb{F}}_p,~x\mapsto\frac{\tilde{f}_x(\tilde{O}_1)}{\tilde{f}_x(\tilde{O}_2)}$$
satisfying that $\sigma(\tilde{\iota}(x))=\tilde{\iota}(\sigma(x))$ for all $x\in V_{k,\ell}(\overline{\mathbb{F}}_p) - \{0\}$ and $\sigma\in\textrm{Gal}(\overline{\mathbb{F}}_p/\mathbb{F}_p)$. So we have
$$\tilde{P}_{k,\ell}(X):=\prod_{x\in V_{k,\ell}(\overline{\mathbb{F}}_p) - \{0\}}(X-\tilde{\iota}(x))\in\mathbb{F}_p[X]$$
and
$$\tilde{Q}_{k,\ell}(X):=\prod_{L\in\mathbb{P}( V_{k,\ell}(\overline{\mathbb{F}}_p))}(X-\sum_{x\in L - \{0\}}\tilde{\iota}(x))\in\mathbb{F}_p[X]$$
such that $P_{k,\ell}(X)\mod p=\tilde{P}_{k,\ell}(X)$ and $Q_{k,\ell}(X)\mod p=\tilde{Q}_{k,\ell}(X)$.

One of the key results of \cite{Edixhoven} is that
% (for a different $\tilde{\iota}$ than we use)
the heights of $P_{k,\ell}(X)$ and $Q_{k,\ell}(X)$ can be bounded by a polynomial in $\ell$.
% Mascot's $\tilde\iota$ gives significantly smaller heights, see Table~\ref{comparingsizes}:
% was not to improve the theoretical asymptotics of computing modular Galois representations, but actually computing modular Galois representations, he did not prove something similar for this $\tilde\iota$.  So instead of using the
%
We use Mascot's $\tilde\iota$ because it is a significant improvement (Table~\ref{comparingsizes}).
Without a suitable height bound for $\tilde\iota$, to prove that
the output of our algorithm is correct we simply compute $P_{k,\ell}(X)$ and $Q_{k,\ell}(X)$ modulo enough
primes until the reconstructed polynomials have the right properties, and then prove correctness afterwards.
Note that there might even be a more optimal choice of $\tilde\iota$ since the degree $\ell+1$ polynomial $Q_{k,\ell}(X)$ can be reduced (polredabs in PARI/GP)
to a polynomial of much smaller height that still defines the same number field, see Table \ref{Polysforprojectiverepns}.

Let $\tilde{\rho}_{k,\ell}:\GQ\to\textrm{GL}_2(\mathbb{F}_\ell)\to\textrm{PGL}_2(\mathbb{F}_\ell)$ be the projective representation associated to $\Delta_{k}\mod\ell$. Then $\tilde{\rho}_{k,\ell}$ factors through $\textrm{Gal}(K_{k,\ell}/\mathbb{Q})$, where $K_{k,\ell}$ is the fixed field of $\ker \tilde{\rho}_{k,\ell}$, i.e. 
the splitting field of $Q_{k,\ell}^{red}(X)$. The following theorem % MOVED:(Theorem 1.1. in \cite{Dokchitser})
helps to determine the matrix $\tilde{\rho}_{k,\ell}(\textrm{Frob}_p)$.

\begin{theorem}\label{Dokchitser's method} (Theorem 1.1. in \cite{Dokchitser})
Let $K$ be a global field and $f(x)\in K[x]$ a separable polynomial with Galois group $G$ and roots $a_1,\ldots,a_n$ in some splitting field. There is a polynomial $h(x)\in K[x]$ and polynomials $\Gamma_C\in K[X]$ indexed by the conjugacy classes $C$ of $G$, defined as
$$\Gamma_C(X)=\prod_{\sigma\in C}(X-\sum_{j=1}^n h(a_j)\sigma(a_j))$$
such that
$$\textrm{Frob}_\wp\in C\iff \Gamma_C(\textrm{Tr}_{\frac{\mathbb{F}_q[x]}{f(x)}/\mathbb{F}_q}(h(x)x^q))=0\mod\wp$$
for almost all primes $\wp$ of $K$, here $\mathbb{F}_q$ is the residue field at $\wp$.
\end{theorem}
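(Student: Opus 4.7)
The plan is as follows. Let $L$ be the splitting field of $f$ over $K$, fix a prime $\mathfrak P$ of $L$ above $\wp$, and write $\sigma:=\Frob_{\mathfrak P}\in G$, so that its conjugacy class is $\Frob_\wp$. I work outside a finite exceptional set of primes $\wp$: those ramifying in $L$, those at which $\bar f$ becomes inseparable, those appearing in the denominators of the $a_j$ and of $h(a_j)$, and (for the converse direction below) those dividing certain resultants. For such $\wp$, reduction modulo $\mathfrak P$ sends each $a_j$ to a root $\bar a_j$ of $\bar f$ in $\overline{\mathbb F}_p$, and the defining property of $\Frob_{\mathfrak P}$ is that $\sigma(a_j)\equiv \bar a_j^{\,q}\pmod{\mathfrak P}$.

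The first key step is to identify the trace explicitly. Since $\bar f$ is separable, the \'etale $\mathbb F_q$-algebra $\mathbb F_q[x]/(f(x))$ decomposes as a product of residue fields at the $\Frob_q$-orbits on $\{\bar a_j\}$, and for any polynomial $P(x)$ the trace of multiplication by $P(x)$ equals $\sum_j P(\bar a_j)$. Taking $P(x)=h(x)x^q$ yields
$$\Tr_{\mathbb F_q[x]/(f(x))/\mathbb F_q}\bigl(h(x)x^q\bigr)=\sum_j h(\bar a_j)\,\bar a_j^{\,q}\equiv \sum_j h(a_j)\,\sigma(a_j)=\beta_\sigma \pmod{\mathfrak P},$$
where $\beta_\sigma:=\sum_j h(a_j)\sigma(a_j)$ is precisely the root of $\Gamma_C$ coming from $\sigma$ when $\sigma\in C$. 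A direct change of index then shows $\tau(\beta_\sigma)=\beta_{\tau\sigma\tau^{-1}}$ for any $\tau\in G$, so $G$ permutes $\{\beta_\sigma:\sigma\in C\}$ within itself; hence $\Gamma_C$ is $G$-invariant and lies in $K[X]$.

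The forward direction $(\Longrightarrow)$ is now immediate: if $\Frob_\wp=C$ then $\sigma\in C$ and $\beta_\sigma$ is a root of $\Gamma_C$ in $L$, so $\Gamma_C$ vanishes at the reduction of $\beta_\sigma$ modulo $\mathfrak P$, which by the trace identity above is exactly the value $\Tr_{\mathbb F_q[x]/(f(x))/\mathbb F_q}(h(x)x^q)$. Since $\Gamma_C$ evaluated at this trace is a priori an element of $\mathcal O_K/\wp$, this vanishing descends from $\mathcal O_L/\mathfrak P$ to $\mathcal O_K/\wp$, which is the asserted congruence.

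The main obstacle, and what the phrase \emph{for almost all primes} is buying, lies in the converse $(\Longleftarrow)$. The polynomial $h$ must be chosen so that the resolvents $\beta_\sigma$ separate conjugacy classes of $G$, i.e.\ $\beta_\sigma\neq\beta_{\sigma'}$ whenever $\sigma\in C$, $\sigma'\in C'$, $C\neq C'$. Since $\beta_\sigma$ depends $K$-linearly on the coefficients of $h$, the conditions $\beta_\sigma=\beta_{\sigma'}$ cut out finitely many proper subspaces of the coefficient space, so a generic $h\in K[x]$ avoids them; this existence claim is the crux. Granting it, $\Gamma_C$ and $\Gamma_{C'}$ share no root in $L$ for $C\neq C'$, hence are coprime in $K[X]$ with nonzero resultant in $\mathcal O_K$. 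Excluding the finitely many $\wp$ dividing any such resultant, the reductions $\Gamma_C\bmod\wp$ and $\Gamma_{C'}\bmod\wp$ remain coprime in $\mathbb F_q[X]$. So if $\Frob_\wp=C'\neq C$, the trace reduces to $\beta_{\sigma'}$ with $\sigma'\in C'$, a root of $\Gamma_{C'}\bmod\wp$ but not of $\Gamma_C\bmod\wp$, giving the required non-vanishing and completing the equivalence.
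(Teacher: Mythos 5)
First, a remark on the ground truth: the paper does not prove this statement at all --- it is quoted verbatim as Theorem 1.1 of the cited Dokchitser--Dokchitser paper --- so your proposal is really being measured against the argument in that reference, whose skeleton it essentially reproduces. Your trace identity on the \'etale algebra $\mathbb{F}_q[x]/(f(x))$, the computation $\tau(\beta_\sigma)=\beta_{\tau\sigma\tau^{-1}}$ giving $\Gamma_C\in K[X]$, the forward implication via reduction modulo $\mathfrak{P}$, and the reduction of the converse to pairwise coprimality of the $\Gamma_C$ away from primes dividing their resultants are all correct.

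The one genuine gap is the point you yourself flag and then grant: that the loci $\{h:\beta_\sigma=\beta_{\sigma'}\}$ are \emph{proper} subspaces of the coefficient space. Linearity in $h$ only makes them subspaces; properness is not automatic --- for $h=1$ one has $\beta_\sigma=\sum_j a_j$ for \emph{every} $\sigma$, so all of these loci contain the constants --- and it is exactly here that the hypotheses of the theorem must be used. The missing step is: for $\sigma\neq\sigma'$ in $G$, the Galois group of the splitting field acts faithfully on the roots, so $(\sigma(a_1),\dots,\sigma(a_n))\neq(\sigma'(a_1),\dots,\sigma'(a_n))$; since $f$ is separable the $a_j$ are distinct, so the Vandermonde matrix $(a_j^m)_{0\le m\le n-1,\ 1\le j\le n}$ is invertible, whence $\sum_j a_j^m\sigma(a_j)\neq\sum_j a_j^m\sigma'(a_j)$ for some $m<n$. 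Thus the linear functional $h\mapsto\beta_\sigma(h)-\beta_{\sigma'}(h)$ is nonzero on polynomials of degree less than $n$, each bad locus is a proper $K$-subspace, and since a global field is infinite a finite union of proper subspaces cannot exhaust the coefficient space. With that paragraph inserted your proof is complete. (Two minor points: you should also exclude the finitely many primes at which the leading coefficient of $f$ vanishes, so that $\bar f$ retains $n$ distinct roots; and note that you only need $\beta_\sigma\neq\beta_{\sigma'}$ across distinct conjugacy classes, so repeated roots within a single $\Gamma_C$ are harmless, as your argument implicitly uses.)
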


The `almost all primes' in the theorem are those not dividing the denominators of the coefficients
of $f$, its leading coefficient and the resultants of $\Gamma_C(X)$ and $\Gamma_{C'}(X)$ for all $C\not= C'$.

Let
 $$I_{k,\ell}:\mathbb{Q}[X]/(Q_{k,\ell}(X))\to \mathbb{Q}[X]/(Q_{k,\ell}^{red}(X))$$
be the isomorphism between the field defined by $Q_{k,\ell}$ and its polredabs-reduced polynomial $Q_{k,\ell}^{red}$, then
$$\prod_{L\in\mathbb{P}( V_{k,\ell}(\overline{\mathbb{F}}_p))}(X-I_{k,\ell}(\sum_{x\in L - \{0\}}\tilde{\iota}(x)))=Q_{k,\ell}^{red}(X) \mod p.$$
Let
$\tilde{\alpha}_i$, $1\le i\le \ell+1$ be the roots
of $Q_{k,\ell}^{red}(X)\mod p$ for some splitting prime $p$.
We Hensel lift each root $\tilde{\alpha}_i$ to a root $\alpha_i \in\mathbb{Q}_p$
of $Q_{k,\ell}^{red}(X)$ with high $p$-adic precision. Then we can recover the polynomial
$$\Gamma_C(X):=\prod_{\sigma\in C}\left(X-\sum_{i=1}^{\ell+1}h(\alpha_i)\sigma(\alpha_i) \right)\in\mathbb{Q}[X]$$
where $h(x)\in \mathbb{Q}[x]$ is a small
auxiliary polynomial and $C$ is a conjugacy class of $\textrm{PGL}_2(\mathbb{F}_\ell)$. In practice, taking $h(x)=x^2$ sufficed each time
to identify the Frobenius endomorphism. Given $\Gamma_C(X)$ for all conjugacy classes $C$ of $\textrm{PGL}_2(\mathbb{F}_\ell)$ and a prime number $p$ not dividing $\textrm{Res}(\Gamma_C(X),\Gamma_{C'}(X))$ for all $C\not=C'$, then we have
$$\tilde{\rho}_{k,\ell}(\textrm{Frob}_p)\in C\iff\Gamma_C(\textrm{Tr}_{\frac{\mathbb{F}_p[x]}{Q_{k,\ell}^{red}(x)}/\mathbb{F}_p}(h(x)x^p))\equiv0\mod p.$$
Since $\textrm{Tr}({\rho}_{k,\ell}(\textrm{Frob}_p))\equiv \tau_k(p)\mod\ell$ and $\textrm{Det}({\rho}_{k,\ell}(\textrm{Frob}_p))\equiv p^{k-1}\mod\ell$, we have for all $\lambda$ in $\mathbb F_\ell^\times$ that
\begin{equation}\label{taupmodl}
\tau_k(p)\equiv\pm \sqrt{\frac{p^{k-1}}{\textrm{Det}(\lambda{\rho}_{k,\ell}(\textrm{Frob}_p))}}\cdot\textrm{Tr}(\lambda{\rho}_{k,\ell}(\textrm{Frob}_p))\mod\ell.
\end{equation}
So we can reconstruct $\tau_k(p)$ up to sign knowing only $\tilde{\rho}_{k,\ell}(\textrm{Frob}_p)$.
The steps of computing the polynomials $P_{k,\ell}$ and $Q_{k,\ell}$  corresponding to the representation space $V_{k,\ell}$ are summarized as follows.
\begin{alg}\label{algorithm2}Constructing the representation space.

Input: Level $\ell$, weight $k$, $f_H(x,y)$, $f_\ell(x,y)$ and a set of positive integers $\mathcal{S}$ such that $\mathfrak{m}_{k,\ell}=\langle\ell,T_n-\tau_k(n):n\in\mathcal{S}\rangle$.

Output: $P_{k,\ell}$ and $Q_{k,\ell}$.
\begin{itemize}
\item[1.]  Initialization.
\begin{itemize}
\item[1.1.] Set $p:=\ell+1$, $M:=\emptyset$ and $O, O_1, O_2$ three distinct rational cusps of $X_H(\ell)$.

\item[1.2.] Compute the characteristic polynomial $\phi_2(x)$ of $T_2$ on $\prod_f' A_f$.

\item[1.3.] Compute the characteristic polynomial of $T_n$ on $A_{\Delta_{k,\ell}}$ for all $n\in\mathcal{S}$.
\end{itemize}
\item[2.] Search the next prime number $p$ such that $d_p\le 4$, and then set $q=p^{d_p}$.

\item[3.] Compute $|J_H(\ell)(\mathbb{F}_q)|$, $|A_{\Delta_{k,\ell}}(\mathbb{F}_q)|$ using modular symbol algorithms for modular forms.

\item[4.] Pick a random point $P$ on $J_H(\ell)(\mathbb{F}_q)$ and compute the image $\pi_{k,\ell}(P)$ using the function field $\mathbb{F}_q(x)[y]/(f_H(x,y))$ and He\ss's algorithm.
\item[5.] Find $e_1$ and $e_2$ such that $V_{k,\ell}=\mathbb{F}_\ell e_1+\mathbb{F}_\ell e_2$ by running Step 4 for several times.

\item[6.] For $0\le i,j\le \ell-1$ and $(i,j)\not=(0,0)$, compute the reduction of each point $x_{ij}:=ie_1+je_2$ along $O$, denote it as $D_{ij}$. If the stability $\theta(x_{ij})$ is not equal to the genus of $X_H(\ell)$ then goto Step 2, otherwise compute the one-dimensional Riemann-Roch space $\mathcal{L}(D_{ij})$ and
let $f_{ij}$ be a basis of $\mathcal{L}(D_{ij})$. Let $q_{i,j} := f_{ij}(O_1)/f_{ij}(O_2)$.

\item[7.] Compute the polynomials $\tilde{P}_{k,\ell}$ and $\tilde{Q}_{k,\ell}$ as
$$\tilde{P}_{k,\ell}:=\prod_{0\le i,j\le \ell-1,(i,j)\not=(0,0)}\left(X-q_{i,j}\right)$$
$$\tilde{Q}_{k,\ell}:=\left(X-\sum_{i=1}^{\ell-1} q_{i,0} \right)\cdot\left(X-\sum_{j=1}^{\ell-1} q_{0,j} \right)\cdot \prod_{i=1}^{\ell-1}\left(X-\sum_{j=1}^{\ell-1}
%% NOT CORRECT: \frac{f_{ij}(O_2)}{f_{ij}(O_3)}
q_{ij, j}
\right)$$
where the $ij$ in $q_{ij,j}$ refers to $ij$ mod $\ell$.
Update $M:=M\cup\{(\tilde{P}_{k,\ell},p),(\tilde{Q}_{k,\ell},p)\}$.
\item[8.] Try to reconstruct ${P}_{k,\ell}$ and ${Q}_{k,\ell}$ over $\mathbb{Q}$ from $M$ using the
Chinese Remainder Theorem. If this succeeds, test the polynomials and output them, otherwise goto Step 2.
\end{itemize}
\end{alg}

\section{Implementations and computational results}

The algorithms in Sections 2 and 3 are implemented using Magma \cite{Bosma}. We found particularly simple equations for $X_H(\ell)$ and largely extend the computational results on Galois representations associated to level one modular forms as well.

We follow the strategy proposed by Bosman (see Chapter 7 of \cite{Edixhoven}) to prove Theorem \ref{mainthm} in Section~1.
The proof is divided into two parts. First, we verified using Magma that each polynomial $Q^{red}_{k,\ell}(x)$ in Table \ref{Polysforprojectiverepns} has the right Galois group, i.e. $\textrm{Gal}(Q^{red}_{k,\ell}(x))\cong\textrm{PGL}_2(\mathbb{F}_\ell)$.

%use MAGMA to do the verification :
%G, R, S := GaloisGroup( pol ); //compute the Galois group
%GaloisProof( pol, S );// give a proof of the Galois data
%IsIsomorphic( G, PGL(2,l) );//verify G iso PGL(2,l)
Then, we verify that the Galois representation $\GQ\to\textrm{PGL}_2(\mathbb{F}_\ell)$ arising from the isomorphism $\textrm{Gal}(Q^{red}_{k,\ell}(x))\cong\textrm{PGL}_2(\mathbb{F}_\ell)$ has the right Serre invariants (level and weight). Using Tate's theorem on lifting projective Galois representations (see \cite{Serre} for the proof) and a theorem of Moon and Taguchi \cite{Moon-Taguchi}, we verify that the lifted representation $\rho:\GQ\to\textrm{GL}_2(\mathbb{F}_\ell)$ has weight $k$ and level 1 by verifying that the discriminant of the number field $\mathbb{Q}[x]/(Q^{red}_{k,\ell}(x))$ is $(-1)^{(\ell-1)/2}\ell^{k+\ell-2}$. The verification procedure includes a program for computing the discriminant of number fields with small ramified
primes, and can be found at the website mentioned in Section 1.

The proof of Corollary \ref{maincor} in Section 1 is based on the following lemma and a fact pointed out by Serre \cite{Serre2}, i.e. if $p$ is prime number satisfying $\tau(p)=0$ then $p$ can be written as
$$p=hM-1$$
with
$$M=2^{14}3^75^3691,$$
$$\left(\frac{h+1}{23}\right)=1, \textrm{and } h\MOD 49 \in\{ 0,30,48\}.$$
\begin{lemma}Let $Q^{red}_{k,\ell}(x)$ be a polynomial in Table \ref{Polysforprojectiverepns} and $p\nmid\textrm{Disc}(Q^{red}_{k,\ell}(x))$ a prime number. Then
$\tau_k(p)\equiv 0 \mod\ell$ if and only if $Q^{red}_{k,\ell}(x)\MOD p$ has an irreducible factor of degree 2 over $\mathbb{F}_p$.
\end{lemma}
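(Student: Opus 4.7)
The plan is to translate the factorization of $Q^{red}_{k,\ell}(x) \bmod p$ into a statement about the cycle structure of $\tilde\rho_{k,\ell}(\Frob_p)$ on $\mathbb{P}^1(\mathbb{F}_\ell)$, and then carry out a short conjugacy-class analysis in $\textrm{PGL}_2(\mathbb{F}_\ell)$ to identify the ``degree~$2$ factor'' condition with ``trace $\equiv 0 \bmod \ell$''.

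First I would record the setup: by construction the splitting field of $Q^{red}_{k,\ell}$ is $K_{k,\ell}$, and the roots of $Q^{red}_{k,\ell}$ are in $\GQ$-equivariant bijection with the $\ell+1$ lines in $\mathbb{P}(V_{k,\ell}) \cong \mathbb{P}^1(\mathbb{F}_\ell)$, with Galois acting through $\tilde\rho_{k,\ell}$. Since $p \nmid \Disc(Q^{red}_{k,\ell})$, the prime $p$ is unramified in $K_{k,\ell}$ and $Q^{red}_{k,\ell}(x)\bmod p$ is separable, so the standard dictionary applies: the degrees of the irreducible factors of $Q^{red}_{k,\ell}(x) \bmod p$ are exactly the orbit lengths of $\tilde\rho_{k,\ell}(\Frob_p)$ acting on $\mathbb{P}^1(\mathbb{F}_\ell)$. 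Hence ``$Q^{red}_{k,\ell}(x)\bmod p$ has an irreducible factor of degree $2$'' is equivalent to ``$\tilde\rho_{k,\ell}(\Frob_p)$ has an orbit of length exactly $2$ on $\mathbb{P}^1(\mathbb{F}_\ell)$''.

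The main step is then the conjugacy-class analysis. Let $A := \rho_{k,\ell}(\Frob_p) \in \GL_2(\mathbb{F}_\ell)$ with $\Tr(A) \equiv \tau_k(p)$ and $\det(A) \equiv p^{k-1} \pmod \ell$, and let $\tilde A$ be its image in $\textrm{PGL}_2(\mathbb{F}_\ell)$. I would go through the four types of elements:
\begin{itemize}
\item identity: all orbits have length $1$;
\item unipotent: one fixed point and one orbit of length $\ell$;
\item split semisimple with eigenvalue ratio of order $n$ in $\mathbb{F}_\ell^\times$: two fixed points and $(\ell-1)/n$ orbits of length $n$;
\item non-split semisimple with order $n$ in $\textrm{PGL}_2$: no fixed points and $(\ell+1)/n$ orbits of length $n$.
\end{itemize}
In each case an orbit of length exactly $2$ appears if and only if $\tilde A$ has order $2$ in $\textrm{PGL}_2(\mathbb{F}_\ell)$, i.e.\ $A^2 \in \mathbb{F}_\ell^\times \cdot I$ with $A$ non-scalar. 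Since the minimal polynomial of such an $A$ is of the form $x^2 - \lambda$, this is equivalent to $\Tr(A) \equiv 0 \pmod \ell$. Combining with $\Tr(A) \equiv \tau_k(p) \pmod \ell$ finishes the proof in both directions. (The degenerate case $A$ scalar is excluded here because then $\Tr(A) = 2a \not\equiv 0 \bmod \ell$, as $\ell > k \geq 12 > 2$.)

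The step I expect to require the most care is the orbit-count case analysis, specifically making sure that higher-order split or non-split semisimple elements never produce an orbit of length $2$ alongside longer orbits; the listed orbit structures rule this out because in the split case the non-fixed orbits all have the common length $n = \textrm{ord}(\lambda_1/\lambda_2)$ and in the non-split case all orbits have length $n$. Apart from this, everything else is bookkeeping: the reduction from factorization mod $p$ to cycle structure, and the equivalence between $\tilde A$ having order $2$ and $A$ having trace zero.
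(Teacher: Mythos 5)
Your proof is correct. The paper itself gives no argument here — it simply cites Lemma 7.4.1 of Edixhoven--Couveignes, and your write-up is precisely the standard argument behind that citation: Dedekind's correspondence between the factorization type of $Q^{red}_{k,\ell}\bmod p$ and the cycle structure of $\tilde\rho_{k,\ell}(\Frob_p)$ on $\mathbb{P}^1(\mathbb{F}_\ell)$, followed by the observation that a non-scalar $A\in\GL_2(\mathbb{F}_\ell)$ produces an orbit of length exactly $2$ if and only if its image has order $2$ in $\textrm{PGL}_2(\mathbb{F}_\ell)$, if and only if $\Tr(A)\equiv 0$; your case analysis (including ruling out the scalar and unipotent cases via $\ell>2$) is complete.
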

\begin{proof}See \cite{Edixhoven} Chapter 7, Lemma 7.4.1.
\end{proof}

Using the polynomials defining the projective representations associated to $\Delta_{12}\mod\ell$ for $\ell \in\{11,13,17,19,29,31,41\}$, we found the first prime number $p$ satisfying Serre's criteria as well as $\tau(p)\equiv0\mod11\cdot13\cdot17\cdot19\cdot29\cdot31\cdot41$ is $816212624008487344127999$. So Corollary \ref{maincor} is proved.

Using  Dokchitser's method (Theorem \ref{Dokchitser's method}) for finding Frobenius element and formula \ref{taupmodl},
the value of  $\tau(p)\mod41$ for a large prime $p$ can be computed efficiently, for example
$$\tau(10^{1000}+1357)\equiv\pm8\mod41.$$

% MOVED THE TABLE.
%
% \begin{longtable}{|c|c|c|c|c|}
% \caption{Coefficient sizes (\# digits, original/Mascot) of $P_{k,\ell}(X)$ and $Q_{k,\ell}(X)$.}\label{comparingsizes}\\
% \hline
%    $(k,\ell)$     &(12,13)  & (12,17)  &  (12,19) & (12,31)  \\
% \hline
%   $P_{k,\ell}(X)$  &69/41      & 367/168       &  685/228      & -/815 \\
% \hline
%   $Q_{k,\ell}(X)$  &24/17       &215/72         &407/100         &1275/336   \\
% \hline
% \end{longtable}
% $69/41$ means that the maximal coefficient of the original $P_{12,13}(X)$ (resp. improved one) has 69 (resp. 41) decimal digits.

\renewcommand\arraystretch{1.5}
\tabcolsep=5.5pt
\begin{longtable}{|>{\RaggedRight}p{10mm}<{\centering}   |>{\RaggedRight}p{150mm} <{\centering}|}
\caption{Equation for $X_H(\ell)$}\label{equationforXHell}\\
  \hline
  $X$&$\scriptstyle{f_{3}^ {3}f_{5}^ {3}f_{6}f_{8}^ {6}f_{13}f_{15}^ {9}}/(f_{4}^ {5}f_{7}^ {7}f_{9}^ {3}f_{10}f_{11}f_{12}f_{14}^ {10})$\tabularnewline \hline
  $Y$&$\scriptstyle f_{2}^ {7}f_{3}^ {62}f_{5}^ {66}f_{6}^ {6}f_{8}^ {168}f_{15}^ {210}/(f_{4}^ {102}f_{7}^ {198}f_{9}^ {54}f_{10}^ {6}f_{11}^ {6}f_{12}^ {6}f_{13}^ {6}f_{14}^ {216})$\tabularnewline \hline
  $f_{29,2}$&$\scriptstyle-(X^2-5X-1)^7Y^3+X^3(4X^2+9X-4)(34X^6+621X^5+389X^4+917X^3-389X^2+621X-34)Y^2+X^5(92X^3+25X^4+25-92X+25X^2)Y+X^7$\tabularnewline\hline
  $X$&$\scriptstyle 1/f_{10}$\tabularnewline \hline
  $Y$&$\scriptstyle f_{5}f_{8}f_{15}/(f_{4}f_{7}^ {2}f_{14})$\tabularnewline \hline
  $f_{29,7}$& $\scriptstyle X^2Y^6+(2X-1)(X-1)Y^5+(X-1)(2X^2-X+1)Y^4+2X^2(X-1)^2Y^3+X^2(3X-2)(X-1)Y^2+X(2X^2-2X+1)(X-1)^2Y+X^4(X-1)^2$ \tabularnewline \hline
  $X$&$\scriptstyle f_{5}^ {3}f_{8}^ {2}f_{9}f_{10}f_{11}f_{13}f_{15}^ {2}/(f_{4}^ {4}f_{6}f_{7}^ {4}f_{12}f_{14}f_{16}^ {2})$\tabularnewline \hline
  $Y$&$\scriptstyle f_{4}^ {7}f_{8}f_{9}^ {3}f_{16}^ {4}/(f_{3}^ {2}f_{5}^ {3}f_{6}^ {5}f_{7}^ {2}f_{12}f_{14}f_{15}^ {3})$\tabularnewline \hline
  $f_{31,3}$&$\scriptstyle Y^6-3XY^5-X(-9X+2X^2-4)Y^4+X(X-1)(4X^2+3X+2)Y^3+X^2(X^4+9X^2-3X^3+9-X)Y^2-X^2(2X^2-4X+3)Y+X^2$\tabularnewline \hline
  $X$&$\scriptstyle f_{3}^ {10}f_{5}^ {20}f_{6}^ {27}f_{8}^ {5}f_{10}f_{11}f_{12}f_{14}f_{15}^ {27}/(f_{2}f_{4}^ {40}f_{7}f_{9}^ {20}f_{16}^ {27})$\tabularnewline \hline
  $Y$&$\scriptstyle f_{3}^ {14}f_{5}^ {28}f_{6}^ {36}f_{8}^ {9}f_{10}f_{11}f_{12}f_{13}f_{14}f_{15}^ {39}/(f_{2}f_{4}^ {56}f_{7}^ {3}f_{9}^ {27}f_{16}^ {38})$\tabularnewline \hline
  $f_{31,5}$&$\scriptstyle (-6X^2-X^4-4X^3-1-4X)Y^6+(-X^5+1+20X^2+14X+9X^3)Y^5+(-14X^2+4X^5+9X^4-11X+7X^3)Y^4+(5X^2-5X^5-13X^3-6X^4)Y^3+(20X^3+6X^5+7X^4)Y^2+(9X^4+X^6+2X^5)Y+X^5$\tabularnewline \hline
  $X$&$\scriptstyle f_{2}^ {3}f_{4}^ {44}f_{5}^ {12}f_{7}^ {16}f_{9}^ {70}f_{19}^ {114}/(f_{3}^ {42}f_{6}^ {104}f_{8}^ {34}f_{10}^ {2}f_{11}^ {2}f_{12}^ {2}f_{13}^ {2}f_{14}^ {2}f_{15}^ {2}f_{16}^ {2}f_{17}^ {2}f_{18}^ {116})$\tabularnewline \hline
  $Y$&$\scriptstyle f_{3}^ {5}f_{6}^ {14}f_{8}^ {7}f_{13}f_{14}f_{15}f_{17}f_{18}^ {16}/(f_{4}^ {8}f_{7}^ {4}f_{9}^ {9}f_{10}f_{11}f_{12}f_{16}f_{19}^ {15})$\tabularnewline\hline
  $f_{37,2}$&$\scriptstyle X^5Y^6+(36X^5+33X^4)Y^5+(-466X^4+429X^5+51X^3-13X^2)Y^4+(542X^4+1656X^5+63X^2-91X^3-5X+1)Y^3+(13X^2+466X^4-429X^5-51X^3)Y^2+(36X^5+33X^4)Y-X^5$\tabularnewline \hline
 $X$&$\scriptstyle f_{3}f_{8}f_{13}f_{18}^ {3}/(f_{5}f_{9}f_{10}f_{11}f_{14}f_{19}^ {2})$\tabularnewline \hline
 $Y$&$\scriptstyle f_{3}^ {4}f_{6}^ {10}f_{8}^ {3}f_{12}f_{15}f_{16}f_{17}f_{18}^ {10}/(f_{4}^ {4}f_{5}f_{7}^ {2}f_{9}^ {6}f_{13}f_{19}^ {11})$\tabularnewline \hline
  $f_{37,3}$&$\scriptstyle -X^5Y^5+(5X^4-9X^2-3X^3-5X-1)Y^4-X(9X^2-2X-3)Y^3+3X(X+3)Y^2+5XY+X
$\tabularnewline \hline
 $X$&$\scriptstyle f_{11}/f_{13}$\tabularnewline \hline
 $Y$&$\scriptstyle f_{7}f_{12}f_{19}/(f_{8}f_{13}f_{18})$\tabularnewline \hline
  $f_{37,9}$&$\scriptstyle -X^2Y^9+X^2(X+1)Y^8+X(X+1)(X^3+X^2+X-2)Y^7-(X+1)(X^5-X^4+2X^3-4X^2-6X-1)Y^6+(X^5-2X^4+2X^3-2X^2-4X-2)(X+1)^2Y^5+(X^7-2X^4-4X^3-6X^2-X+1)(X+1)^2Y^4-X(X^5-X^4-X^3-6X^2-8X-4)(X+1)^2Y^3-X(X^4-X^3-X+2)(X+1)^3Y^2+X^2(X^2-2X-2)(X+1)^3Y+X^2(X+1)^3$\tabularnewline \hline
 $X$&$\scriptstyle f_{4}^ {144}f_{8}^ {282}f_{10}^ {216}f_{11}^ {3}f_{12}^ {3}f_{13}^ {3}f_{14}^ {3}f_{15}^ {3}f_{16}^ {3}f_{17}^ {3}f_{18}^ {3}f_{19}^ {3}f_{20}^ {213}/(f_{2}^ {5}f_{3}^ {64}f_{6}^ {24}f_{7}^ {246}f_{9}^ {117}f_{21}^ {210})$\tabularnewline \hline
 $Y$&$\scriptstyle f_{3}^ {2}f_{6}^ {2}f_{7}^ {8}f_{9}^ {3}f_{16}f_{18}f_{21}^ {8}/(f_{4}^ {4}f_{8}^ {10}f_{10}^ {6}f_{11}f_{12}f_{13}f_{14}f_{15}f_{17}f_{19}f_{20}^ {7})$\tabularnewline \hline
 $f_{41,2}$&$\scriptstyle X^8Y^5-Y^4(58Y^2-58-5Y)X^6+1777Y^4(1+Y^2)X^5+Y^3(1816Y^4+1816-2169Y-49863Y^2+2169Y^3)X^4+
Y^2(1+Y^2)(29Y^4-23832Y^3+77627Y^2+23832Y+29)X^3-Y^2(-881536Y^5+13620Y^6-9728957Y^3-881536Y+3748542Y^2-
3748542Y^4-13620)X^2+Y(1+Y^2)(280Y^6+148329Y^5+1274508Y^4-73664834Y^3-1274508Y^2+148329Y-280)X-(Y^2-64Y-1)^5
$\tabularnewline \hline
 $X$&$\scriptstyle f_{3}f_{6}f_{7}^ {5}f_{21}^ {3}/(f_{4}^ {2}f_{8}^ {4}f_{10}^ {3}f_{16}f_{18}f_{20}^ {2})$\tabularnewline \hline
 $Y$&$\scriptstyle f_{4}^ {8}f_{5}^ {2}f_{8}^ {20}f_{10}^ {15}f_{14}f_{15}f_{17}f_{19}f_{20}^ {15}/(f_{3}^ {5}f_{6}f_{7}^ {19}f_{9}^ {7}f_{11}f_{12}f_{13}f_{21}^ {15})$\tabularnewline \hline
 $f_{41,4}$&$\scriptstyle X^5Y^6+(5X^5-7X^4-X^3-5X^2-5X-1)Y^5+(-18X^4+5X^5-7X-18X^2+6X^3)Y^4+(6X^4+X-6X^2+X^5)Y^3+(-18X^4-18X^2+7X^5-6X^3-5X)Y^2+
(X^6-5X^5-X^3+5X^4+5X+7X^2)Y-X$\tabularnewline \hline
$X$&$\scriptstyle f_{4}f_{8}^ {5}f_{10}^ {3}f_{13}f_{18}f_{20}^ {3}/(f_{3}f_{6}f_{7}^ {5}f_{12}f_{15}f_{21}^ {3})$\tabularnewline \hline
$Y$&$\scriptstyle f_{3}^ {2}f_{6}f_{7}^ {9}f_{9}^ {3}f_{14}f_{21}^ {6}/(f_{4}^ {4}f_{8}^ {10}f_{10}^ {6}f_{13}f_{18}f_{20}^ {6})$\tabularnewline \hline
$f_{41,5}$&$\scriptstyle Y^8X^8-2Y^4(3Y^3+Y-1-2Y^2)X^7+(28Y^5+Y^2-Y-21Y^6+1-2Y^3+8Y^7-13Y^4)X^6+(-52Y^5+49Y^4+5Y-3+16Y^6-17Y^3)X^5+
(48Y^5-Y^2+36Y^3-7Y^6-70Y^4+3-7Y)X^4+(Y-1)(5Y^5-28Y^4+29Y^3-6Y^2-2Y+1)X^3+Y^2(-5-25Y^2+20Y+11Y^3)X^2-3Y^2(Y-1)^3X+Y^2(Y-1)^3$\tabularnewline \hline				    	
$X$&$\scriptstyle f_{3}^ {2}f_{5}^ {2}f_{6}^ {5}f_{7}^ {6}f_{10}^ {2}f_{12}f_{19}f_{20}f_{21}^ {3}/(f_{4}^ {4}f_{8}^ {6}f_{9}^ {3}f_{11}^ {5}f_{16}f_{22}^ {4})$\tabularnewline \hline
$Y$&$\scriptstyle f_{3}^ {2}f_{6}^ {4}f_{7}^ {12}f_{13}f_{14}f_{15}f_{17}f_{18}f_{21}^ {6}/(f_{4}^ {2}f_{8}^ {10}f_{9}^ {2}f_{10}^ {2}f_{11}^ {6}f_{12}f_{19}f_{20}f_{22}^ {6})$\tabularnewline \hline
$f_{43,3}$&$\scriptstyle X^4Y^{10}-3X^4(2X+5)Y^9+X^4(48X^2+92+111X+2X^3)Y^8-X^3(15X^5+526X^2+128X^4+15+313X^3+329X)Y^7+
X^3(15X^6+X^7+92X^5+128+1499X^2+329X^4+1011X+1011X^3)Y^6-X^2(1499X^2+1499X^3+526X^4+313X+2+111X^5+6X^6)Y^5+
X^2(313X^3+48X^4+1011X^2+526X+48)Y^4-X^2(329X+111+128X^2+2X^3)Y^3+X(15X^2+6+92X)Y^2-15XY+1		    	
$\tabularnewline \hline
\end{longtable}

\newpage

\renewcommand\arraystretch{1.5}
\tabcolsep=5.5pt
\begin{longtable}{|>{\RaggedRight}p{20mm}<{\centering}|>{\RaggedRight}p{140mm}<{\centering}|}
\caption{Polynomials for projective representations.}\label{Polysforprojectiverepns}\\
 \hline
    $(k,\ell)$     & $Q_{k,\ell}^{red}(x)$  \\
 \hline
 $(12,29)$ &$x^{30}-3x^{29}-87x^{26}-348x^{25}+3364x^{24}-3016x^{23}-7627x^{22}-11078x^{21}-18792x^{20}+
426967x^{19}-912630x^{18}-2141853x^{17}+13020884x^{16}-20860106x^{15}-13673152x^{14}+
130529116x^{13}-211945746x^{12}-34076769x^{11}+639742407x^{10}-1393480566x^9+
2123886688x^8-2443924830x^7+2204756987x^6-1665273496x^5+908690959x^4-
253988728x^3+11893480x^2+1767126x+1745477
$ \tabularnewline \hline
   $(12,41)$  &$x^{42}-10x^{41}+82x^{40}-533x^{39}+3075x^{38}-16400x^{37}+80975x^{36}-370640x^{35}+
1519378x^{34}-5564971x^{33}+17920690x^{32}-49655756x^{31}+115329720x^{30}-
206406669x^{29}+240009203x^{28}+131055598x^{27}-1265809933x^{26}+2505951898x^{25}-
1541118824x^{24}-7804079523x^{23}+33765940074x^{22}-62075585470x^{21}+8325418672x^{20}
+199473849030x^{19}-310656709925x^{18}-104883756359x^{17}+753871386098x^{16}-
566628777936x^{15}-586603185734x^{14}+1044692298155x^{13}-229863404090x^{12}-
525813316148x^{11}+848849997762x^{10}-393039966527x^{9}-558892835247x^{8}
+872468938446x^{7}+244708057774x^{6}-517918323061x^{5}+44959617394x^{4}+
211599644868x^{3}-38057695352x^{2}-39907608565x+20280586312$\tabularnewline \hline
 $(16,29)$&$x^{30}-13x^{29}+116x^{28}-899x^{27}+6003x^{26}-33002x^{25}+142158x^{24}-437871x^{23}+599981x^{22}+3161522x^{21}
-30157709x^{20}+149069425x^{19}-545068137x^{18}+1602112888x^{17}-3929042061x^{16}+8240756348x^{15}-15020495335x^{14}
+23992472995x^{13}-33394267804x^{12}+40034881756x^{11}-40888329774x^{10}+35730188833x^9-27316581262x^8+17713731976x^7
-7068248851x^6-1463296732x^5+4054490087x^4-2555610007x^3+2573924261x^2+2363203645x-261910751$\tabularnewline \hline
 $(16,43)$&$x^{44}-2x^{43}+129x^{41}-903x^{40}-946x^{39}+14491x^{38}-111499x^{37}+92622x^{36}+1691319x^{35}-7697860x^{34}+13044050x^{33}+112312388x^{32}
-431011016x^{31}+635764116x^{30}+4479038627x^{29}-17986900688x^{28}+17761002123x^{27}+113734065567x^{26}-499847503435x^{25}+
286497523429x^{24}+2354056386953x^{23}-10459820630338x^{22}+12147259637525x^{21}+20067741453659x^{20}-130000420363335x^{19}+
384688694954926x^{18}-485820321658115x^{17}+105790129474267x^{16}+2848140899460525x^{15}-4719664885329376x^{14}+5652559275048886x^{13}
+5788815326549276x^{12}-11470729218321031x^{11}+59608728270470171x^{10}-100591321161751298x^9+180921441441713877x^8-
173177824071705704x^7+281127869880019503x^6-226711920334690606x^5+196387509281481601x^4-80193983753034520x^3+
80212380437798668x^2+18055666311398579x+26917038595190660$\tabularnewline \hline
 $(18,29)$&$x^{30}-x^{29}+29x^{28}-116x^{27}+435x^{26}-3248x^{25}+9947x^{24}-57652x^{23}+348145x^{22}-
1668022x^{21}+9627797x^{20}-45146359x^{19}+193617601x^{18}-785410306x^{17}+
2590336115x^{16}-8081143103x^{15}+21726617822x^{14}-48427338815x^{13}+92038539344x^{12}
-101938632221x^{11}-23635971065x^{10}+420966210322x^9-1674835015037x^8+3364868756248x^7
-2315012281648x^6-5570532030513x^5+12983251570145x^4-1986227939226x^3
-8282713720550x^2-6298098956973x-1759183949485$\tabularnewline \hline
$(18,37)$&$x^{38}-19x^{37}+222x^{36}-1776x^{35}+10915x^{34}-52281x^{33}+192178x^{32}-449587x^{31}-44289x^{30}+
7347904x^{29}-48388600x^{28}+217625675x^{27}-771830767x^{26}+
2358345442x^{25}-5373731075x^{24}+7137737784x^{23}+21109958392x^{22}-
202047172726x^{21}+946901905837x^{20}-3348744435690x^{19}+9462953254801x^{18}-
22880275166766x^{17}+47253834997237x^{16}-88981319473988x^{15}+
157905318032031x^{14}-264902704553019x^{13}+405055047903218x^{12}-
472340175239772x^{11}+227866385419064x^{10}+
406381259592545x^9-1335445860560463x^8+1772915458368853x^7-790671316155658x^6-1184027867126053x^5+1742488440361558x^4-719886497838982x^3-
366627422362591x^2+423416835435221x-141000935841284$\tabularnewline \hline
$(18,41)$&$x^{42}-2x^{41}-1394x^{38}+6642x^{37}-30176x^{36}+42025x^{35}+26281x^{34}-595320x^{33}+7529609x^{32}-2464592x^{31}+71028031x^{30}+268813630x^{29}+
1886054243x^{28}+5435754088x^{27}+25817655679x^{26}+69418136849x^{25}+405207224722x^{24}+869997292888x^{23}+3805858723156x^{22}+7443690885348x^{21}+
32586137482517x^{20}+87689775068921x^{19}+258614112547199x^{18}+392541861700062x^{17}+656103042884690x^{16}+914306104529073x^{15}+3223926858581507x^{14}+
5231616358024282x^{13}+8838554621239255x^{12}+1914012057065804x^{11}+8299844168687279x^{10}+20100370223826819x^9+87436795827343019x^8+
63759507173044916x^7+123906821068999365x^6+246831919541533446x^5+442638825282242467x^4+545141334744587773x^3+472194537897724139x^2+
221738918882225639x+51726007574271052
$\tabularnewline \hline
$(20,31)$&$x^{32}-4x^{31}-62x^{30}+558x^{29}-248x^{28}-23560x^{27}+143499x^{26}+59489x^{25}-
4280108x^{24}+17190864x^{23}+12517459x^{22}-344750256x^{21}+1225662500x^{20}-
278789479x^{19}-14790203106x^{18}+64357190741x^{17}-83774789980x^{16}-
406418167694x^{15}+2480836111912x^{14}-5273524311353x^{13}-3257558862543x^{12}+
54285321863574x^{11}-162450534558477x^{10}+197719989210108x^9+250865100757790x^8-
1714511602191278x^7+4206562171750919x^6-6661579151098950x^5+7460752526582377x^4-
5959749341609879x^3+3269911760551427x^2-1113936554991727x+178725601175511$\tabularnewline \hline
$(20,37)$&$x^{38}-8x^{37}-74x^{35}+740x^{34}+592x^{33}+1998x^{32}-31524x^{31}-
53502x^{30}-46842x^{29}+861952x^{28}+2186034x^{27}+1913344x^{26}-
16066584x^{25}-53860900x^{24}-64902440x^{23}+185452399x^{22}+
861406916x^{21}+1402064606x^{20}-873188826x^{19}-8790442758x^{18}-
18594523974x^{17}-8518042764x^{16}+48955601616x^{15}+145848475197x^{14}+
169649084430x^{13}-42316100208x^{12}-556412529242x^{11}-1063909014419x^{10}-
1054270670966x^9-39590377918x^8+1840697590018x^7+3934087735017x^6+5222200302936x^5+
5163607007328x^4+4108937512400x^3+2432911826752x^2+1111769639264x+373259137536$\tabularnewline \hline
$(22,29)$&$x^{30}-15x^{29}+145x^{28}-957x^{27}+5423x^{26}-28362x^{25}+190704x^{24}-1215738x^{23}+
9030890x^{22}-50755684x^{21}+302445002x^{20}-1541053533x^{19}+8227183130x^{18}-
36312207320x^{17}+118566870446x^{16}-105292107135x^{15}-1691924793888x^{14}
+13329033978106x^{13}-58590901828799x^{12}+169683493444005x^{11}-315306359440118x^{10}
+273465061439983x^9+46526649574358x^8+1224234551141538x^7-10361450966128074x^6+
34659815824570960x^5-67880128879605199x^4+80477031276514924x^3-43420672886426530x^2-
8341058879724619x-1870319891085836$\tabularnewline \hline
$(22,31)$&$x^{32}-3x^{31}-124x^{30}+651x^{29}+5797x^{28}-44020x^{27}-46593x^{26}+1523309x^{25}-
4960682x^{24}-28562129x^{23}+205283395x^{22}+345367838x^{21}-3865963779x^{20}-
5281917640x^{19}+35629245810x^{18}+95827452774x^{17}+227525150938x^{16}-
1735983387875x^{15}-9952753525850x^{14}+15867354189588x^{13}+
146446287180279x^{12}-99789981007214x^{11}-1135328992145553x^{10}-
171825071648506x^9+7446294546204081x^8+294530833190147x^7-
24397472702475140x^6-9976638213111902x^5+61714590456038129x^4+
16902762581347117x^3-13833080015551423x^2-202960986205176103x
+187532019539254309$\tabularnewline \hline
$(22,37)$&$x^{38}-14x^{37}+259x^{36}-1961x^{35}+21016x^{34}-109446x^{33}+921263x^{32}-3307023x^{31}+25225157x^{30}-54040794x^{29}+468439277x^{28}-
177732571x^{27}+6697779183x^{26}+15102386533x^{25}+95823354911x^{24}+467027027922x^{23}+1552177493664x^{22}+8186043757227x^{21}+
23326397273054x^{20}+102773425686308x^{19}+284599390053820x^{18}+1002115462118745x^{17}+2706484715050559x^{16}+7589293500429809x^{15}+
18210715535225813x^{14}+38743465024834893x^{13}+68669684492163440x^{12}+87971508991005731x^{11}+39575428039113138x^{10}-
152455852846579113x^9-629771290390520323x^8-1341552601370955658x^7-2412402149619660642x^6-3173510643427726774x^5-
3412849621807994643x^4-2880329122123064182x^3-2037223141300178414x^2-1317582443043720513x-434573469219991349$\tabularnewline \hline
$(22,41)$&$x^{42}-18x^{41}+123x^{40}-861x^{39}+7380x^{38}-40959x^{37}+231978x^{36}-1413352x^{35}+
6996486x^{34}-38081907x^{33}+189053706x^{32}-917457451x^{31}+4315693702x^{30}-18315504830x^{29}+
81234872473x^{28}-318790621656x^{27}+1367441283261x^{26}-5493803836853x^{25}+23948461399133x^{24}-
99508282718538x^{23}+428746575253775x^{22}-1734584463212166x^{21}+
6876735749430348x^{20}-24827905121788763x^{19}+84833216659986525x^{18}-259437778230162837x^{17}+
738066411091063506x^{16}-1849044869912543534x^{15}+4213586181863965064x^{14}-
8156005605599678501x^{13}+13798349887560500249x^{12}-18028595022441906291x^{11}+
18399612027049480612x^{10}-6833934943203545127x^9-8147747464527030097x^8+
30429728300791774349x^7-24476859699695334748x^6+23454931837408314608x^5+20127918822514029803x^4-
5069638438935010043x^3+30654410371510538992x^2+
33409106662514832514x+15131308571734590785$\tabularnewline \hline
$(26,29)$&$x^{30}-8x^{29}-406x^{27}+3161x^{26}+56405x^{25}-548564x^{24}-790250x^{23}+10634532x^{22}+140239940x^{21}-1035077541x^{20}-1328813611x^{19}+18748981719x^{18}+
82518484681x^{17}-1026164148328x^{16}+4120059344955x^{15}-6818998593950x^{14}-45412134549190x^{13}+87588273401126x^{12}+1920846833452511x^{11}-
7102239213427725x^{10}-7241201654936390x^9+38056121342434911x^8+107005415665914120x^7-433431488131317544x^6+93915016050246679x^5+
24302411958402756x^4+2152162187366627404x^3-2627576944604601474x^2-601832151572568240x-3098422653204866539$\tabularnewline \hline
$(26,31)$&$x^{32}-4x^{31}-31x^{30}+868x^{29}-5642x^{28}+26939x^{27}-
128681x^{26}+878261x^{25}-5466230x^{24}+18081277x^{23}+
20896666x^{22}-590595663x^{21}+3237658414x^{20}-
7823924026x^{19}-8271478466x^{18}+144440084360x^{17}-
545248581427x^{16}+814731722043x^{15}+1545029759212x^{14}-
10100559817781x^{13}+13746840290876x^{12}+50937263692756x^{11}-
312389013580541x^{10}+794269279837735x^9-1279246263404303x^8+
1890280924355725x^7-4573026243585585x^6+12387587893505272x^5-
15936935475740110x^4-5793602599713315x^3+57908374101008694x^2-
86833942062161928x+43493578237258823
$\tabularnewline \hline
$(26,37)$&$x^{38}-74x^{35}+814x^{34}+1184x^{33}-5513x^{32}+1406x^{31}+75739x^{30}+
187812x^{29}+2708326x^{28}+6715574x^27-11574895x^{26}-19400580x^{25}-
60540954x^{24}-257031489x^{23}+219930960x^22+476178678x^{21}-7725949206x^{20}+
7817760303x^{19}+90069526203x^{18}+188854397226x^{17}+190346904225x^{16}-
1554089198490x^{15}-7584658041190x^{14}-13677702715890x^{13}-12884820525090x^{12}-
6813726842980x^{11}-11834535466411x^{10}-46685971384319x^9-52104622636843x^8-
8834390171891x^7-81330026874835x^6-141458791372569x^5-30179350468786x^4-
87042767581262x^3-137469175925255x^2+22045406053866x-42545911481352
$\tabularnewline \hline
$(26,41)$&$x^{42}-16x^{41}+205x^{40}-2296x^{39}+21853x^{38}-183639x^{37}+1408432x^{36}-9250297x^{35}+50067109x^{34}-222765177x^{33}+764959263x^{32}-764168045x^{31}-
17046189684x^{30}+196027654300x^{29}-1602496205751x^{28}+11584076519536x^{27}-73333374570606x^{26}+405985748093720x^{25}-2004709742983360x^{24}+
8715892574008402x^{23}-31747846180895478x^{22}+91431223779418992x^{21}-166629887328691848x^{20}-125100079850044093x^{19}+2621218007838892023x^{18}-
12217279090918678854x^{17}+26330915710315752864x^{16}+46134343749852064266x^{15}-785257752575733924122x^{14}+4784050476754172698578x^{13}-
20269417207395728781750x^{12}+70523538060019897017689x^{11}-216395855160190180594775x^{10}+590985612388177055013665x^9-1535533982339410498146959x^8
+3907575027035666938917328x^7-9391013229994259435004044x^6+20099094569742517895015811x^5-36634154316180650165044084x^4+
51478054781718097858971669x^3-60934951211352211286266257x^2+44645295386777494419181009x-29743028377517568874028591 $\tabularnewline \hline
\end{longtable}

\section{acknowledgments}
We would like to thank Marco Streng and Ronald van Luijk for letting us use about 50 cores of Capella for an extended period of time. This made it
possible to compute
% more modular Galois representation then would have been possible just using a normal computer.
many primes $p$ simultaneously, which greatly reduced the computation time.

\end{document}